\providecommand{\U}[1]{\protect\rule{.1in}{.1in}}
\newtheorem{thm}{Theorem}[section]
\newtheorem{proposition}{Theorem}[section]
\newtheorem{cor}[thm]{Corollary}
\newtheorem{lem}[thm]{Lemma}
\newtheorem{prop}[thm]{Proposition}
\theoremstyle{definition}
\theoremstyle{remark}
\newtheorem{rem}[thm]{Remark}
\numberwithin{equation}{section}
\newcommand{\ed}{\end {document}}
\begin{document}
\title[Radial solutions]{Radial solutions of a fourth order Hamiltonian stationary equation}

\author{Jingyi Chen}
\author{Micah Warren}

\address{Department of Mathematics\\
The University of British Columbia, Vancouver, BC V6T1Z2, Canada}
\email{jychen@math.ubc.ca}

\address{Department of Mathematics\\
University of Oregon, Eugene, OR 97403, U.S.A.}
\email{micahw@uoregon.edu}

\thanks{The first author was supported in by NSERC Discovery Grant 22R80062. The
second author was partially supported by NSF Grant DMS-1438359. }

\begin{abstract}
We consider smooth radial solutions to the Hamiltonian stationary equation
which are defined away from the origin. We show that in dimension two all
radial solutions on unbounded domains must be special Lagrangian. In contrast,
for all higher dimensions there exist non-special Lagrangian radial solutions
over unbounded domains; moreover, near the origin, the gradient graph of such
a solution is continuous if and only if the graph is special Lagrangian.

\end{abstract}
\maketitle

\section{Introduction}

An important class of Lagrangian submanifolds in a symplectic manifold is the
so-called Hamiltonian stationary submanifolds, which are Lagrangian
submanifolds and are critical points of the volume functional under
Hamiltonian variations. A well-known subset of this class consists of the
special Lagrangian submanifolds. \ These are Lagrangian and critical for the
volume functional for all variations; so they are minimal Lagrangian
submanifolds. The special Lagrangians form one of the most distinguished
classes in calibrated geometry, and they play a unique role in string theory.
In this paper, we shall study the radially symmetric Hamiltonian stationary
graphs in the complex Euclidean space $\mathbb{C}^{n}$, and explore conditions
under which the Hamiltonian stationary ones reduce to the special Lagrangians.

For a fixed bounded domain $\Omega\subset{\mathbb{R}}^{n}$, let $u:\Omega
\rightarrow\mathbb{R}$ be a smooth function. The gradient graph $\Gamma
_{u}=\left\{  \left(  x,Du(x)\right)  :x\in\Omega\right\}  $ is a Lagrangian
$n$-dimensional submanifold in $\mathbb{C}^{n}$, with respect to the complex
structure $J$ defined by $z_{j}=x_{j}+iy_{j}$ for $j=1,\cdots,n$. The volume
of $\Gamma_{u}$ is given by
\[
F_{\Omega}(u)=\int_{\Omega}\sqrt{\det\left(  I+\left(  D^{2}u\right)
^{T}D^{2}u\right)  }dx.
\]
A smooth function $u$ is critical for the volume functional $F_{\Omega}(u)$
under compactly supported variations of the scalar function if and only if $u$
satisfies the equation
\begin{equation}
\Delta_{g}\theta=0 \label{Hamstat}%
\end{equation}
where $\Delta_{g}$ is the Laplace-Beltrami operator on $\Gamma_{u}$ for the
induced metric $g$ from the Euclidean metric on $\mathbb{R}^{2n}$, (c.f.
\cite{MR1202805}, \cite[Proposition 2.2]{SW}). Here, the Lagrangian phase
function is defined by
\[
\theta=\operatorname{Im}\log\det\left(  I_{n}+\sqrt{-1}D^{2}u\right)
\]
or equivalently,
\begin{equation}
\theta=\sum_{i=1}^{n}\arctan\lambda_{i} \label{thetadef2}%
\end{equation}
for $\lambda_{i}$ the eigenvalues of $D^{2}u.$ The mean curvature vector along
$\Gamma_{u}$ can be written
\[
\vec{H}=J\nabla\theta
\]
where $\nabla$ is the gradient operator of $\Gamma_{u}$ for the metric $g$
(cf. \cite[2.19]{HL}).

The gradient graph of $u$ which solves (\ref{Hamstat}) is called Hamiltonian
stationary. A Hamiltonian stationary gradient graph $\Gamma_{u}$ is a critical
point of the volume functional $F_{\Omega}(\cdot)$ under Hamiltonian
deformations, that is, those generated by $J\nabla\eta$ for some smooth
compactly supported function $\eta$. On the other hand, recall that if $u$
satisfies the special Lagrangian equation \cite{HL}
\begin{equation}
\nabla\theta=0 \label{sLag}%
\end{equation}
i.e. $\vec{H}\equiv0$, then the surface is critical for the volume functional
under \textit{all} compactly supported variations of the surface $\Gamma_{u}$.
In terms of the potential function $u$, the Hamiltonian stationary equation is
of fourth order and the special Lagrangian equation is of second order, both
are elliptic. There are Hamiltonian stationary but not special Lagrangian
surfaces even when $n=2$, and this causes serious problems for constructing
special Lagrangian surfaces (see \cite{SW}).

In this paper, we consider radial solutions of the Hamiltonian stationary
equation (\ref{Hamstat}) on a domain which may not contain the origin. Our
first observation is that the fourth order Hamiltonian stationary equation
reduces to the second order special Lagrangian equation for radial solutions
defined on any unbounded domain in $\mathbb{R}^{2}$. However, this is not the
case for $n>2$.

We show the following:

\begin{thm}
\label{one}Suppose that $u=u\left(  r\right)  $ is a radial solution of the
fourth order Hamiltonian stationary equation (\ref{Hamstat}) on$~\left\{
x\in\mathbb{R}^{2}:\left\vert x\right\vert >a\right\}  $ for some $a\geq0.$
Then $u$ is a solution of the special Lagrangian equation (\ref{sLag}). For
any $n>2,$ (\ref{Hamstat}) admits a radial solution over any unbounded domain
in $\mathbb{R}^{n}\backslash\{0\}$, which is not special Lagrangian.
\end{thm}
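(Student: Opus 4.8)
The plan is to use the rotational symmetry to turn the fourth--order PDE (\ref{Hamstat}) into an ODE, extract two first integrals, and then read off rigidity when $n=2$ and existence when $n>2$ from a single explicit formula. First I would record the radial data: for $u=u(r)$ the Hessian $D^2u$ has eigenvalue $u''$ in the radial direction and eigenvalue $u'/r$ with multiplicity $n-1$ in the spherical directions, so by (\ref{thetadef2}) the phase is $\theta=\arctan u''+(n-1)\arctan(u'/r)$, and the induced metric $g=I+(D^2u)^2$ is the warped product $g=A^2\,dr^2+B^2 g_{S^{n-1}}$ with $A=\sqrt{1+(u'')^2}$, $B=\sqrt{r^2+(u')^2}$. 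For a radial function $\Delta_g\theta=(AB^{n-1})^{-1}\big(\tfrac{B^{n-1}}{A}\theta'\big)'$, so (\ref{Hamstat}) integrates once to $\tfrac{B^{n-1}}{A}\theta'=C$ for a constant $C$; since $A,B>0$, the graph is special Lagrangian (i.e.\ $\theta$ constant, so (\ref{sLag}) holds) exactly when $C=0$. It is convenient to encode $u$ in the planar profile curve $\zeta(r)=r+iu'(r)$, for which $\arg\zeta'=\arctan u''$ and $\arg\zeta=\arctan(u'/r)$, whence $\theta=\arg\big((\zeta^n)'\big)$, while $ds:=A\,dr=|\zeta'|\,dr$ is its arclength element and $B=|\zeta|$.

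The decisive step is a second first integral. Writing $\psi=\arg\zeta'$, $\phi=\arg\zeta$ and $\alpha=\psi-\phi$ (the angle between position and tangent), one has $d\psi/ds=\kappa$, $d\phi/ds=\sin\alpha/|\zeta|$ and $d|\zeta|/ds=\cos\alpha$. Substituting $\theta=\psi+(n-1)\phi$ into $d\theta/ds=C|\zeta|^{1-n}$ and eliminating $\kappa$ via $d\alpha/ds=\kappa-\sin\alpha/|\zeta|$ gives the linear ODE $\tfrac{d}{d|\zeta|}\sin\alpha+\tfrac{n}{|\zeta|}\sin\alpha=C|\zeta|^{1-n}$, whose integrating factor $|\zeta|^{n}$ yields the conservation law $\sin\alpha=\tfrac{C}{2}|\zeta|^{2-n}+M|\zeta|^{-n}$ for a second constant $M$. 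All the geometry now sits inside this one formula, and the dimension enters only through the exponent $2-n$.

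For $n=2$ the formula reads $\sin\alpha=\tfrac{C}{2}+M|\zeta|^{-2}$. On an unbounded domain $r\to\infty$ forces $|\zeta|\ge r\to\infty$, so $\sin\alpha\to C/2$. If $C\ne0$ then $\sin\alpha$ does \emph{not} tend to $0$, and a short asymptotic analysis of $d\phi/ds=\sin\alpha/|\zeta|$ against $d|\zeta|/ds=\cos\alpha$ shows that $\phi$ is unbounded as $r\to\infty$: when $0<|C|<2$ one gets $d\phi/d|\zeta|\sim(\tan\alpha_\infty)/|\zeta|$ with $\tan\alpha_\infty\ne0$, so $\phi$ grows logarithmically (the profile curve spirals), while $|C|\ge2$ is excluded even more directly since $|\sin\alpha|\le1$ then bounds $|\zeta|$. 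Either way this contradicts $\phi=\arctan(u'/r)\in(-\pi/2,\pi/2)$. Hence $C=0$, proving the first assertion.

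For $n>2$ the same formula instead gives $\sin\alpha=\tfrac{C}{2}|\zeta|^{2-n}+M|\zeta|^{-n}\to0$, and now $d\phi/ds\sim\tfrac{C}{2}|\zeta|^{1-n}$ is integrable, so $\phi$ converges and the obstruction disappears; this is precisely the opening for a construction. I would fix $C\ne0$ and $M=0$, so $\sin\alpha=\tfrac{C}{2}|\zeta|^{2-n}$, pick a limit $\phi_\infty\in(-\pi/2,\pi/2)$, and integrate the profile--curve system outward from a large value of $|\zeta|$. Since $\alpha\to0$ and $\phi\to\phi_\infty$, the real part $r=|\zeta|\cos\phi$ satisfies $dr/d|\zeta|\to\cos\phi_\infty>0$, so $r$ is monotone and tends to $\infty$; inverting produces a smooth $u'(r)$, hence $u(r)$, on an interval $(a,\infty)$, and the scaling symmetry $u(x)\mapsto\lambda^{-2}u(\lambda x)$ lets me realize the solution over any prescribed unbounded radial domain. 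Because $C\ne0$ we have $\theta'=CA/B^{\,n-1}\not\equiv0$, so the solution is genuinely not special Lagrangian. I expect the main obstacle to be exactly this last step: controlling the profile curve globally so that it stays a bona fide graph $r\mapsto(r,u'(r))$ over the full unbounded interval --- in particular locating the inner turning point at $|\zeta|=(C/2)^{1/(n-2)}$ where $\sin\alpha=1$ and $\cos\alpha$ vanishes, and verifying smoothness and monotonicity of $r$ there --- rather than merely exhibiting an abstract solution curve.
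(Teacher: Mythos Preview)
Your derivation of the second first integral $|\zeta|^n\sin\alpha=\tfrac{C}{2}|\zeta|^2+M$ is correct, and your $n=2$ argument built on it is essentially complete; this is a genuinely different route from the paper's. The paper splits the $n=2$ case in two according to $\lim_{r\to\infty}\theta$: when the limit is nonzero (Proposition~\ref{easy proof}) it applies a Lewy--Yuan rotation to obtain a new potential with bounded Hessian, hence quadratic area growth, and then runs a cutoff argument against the finite Dirichlet energy of $\theta$ (Proposition~\ref{Efinite}); when the limit is zero (Proposition~\ref{prop1}) it deduces $(ru')'<0$, introduces $q=\operatorname{arcsinh}(u'/r)$, and extracts a contradiction from the integrability of $\theta'$. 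Your conservation law handles both cases at once via the single observation that $\sin\alpha\to C/2\neq0$ forces $\phi=\arctan(u'/r)$ to diverge logarithmically, which is impossible since $r>0$ pins $\phi\in(-\pi/2,\pi/2)$. The paper's route connects to tools (Lewy--Yuan rotation, energy estimates) used elsewhere in the subject; yours is shorter and entirely self-contained at the ODE level. One small correction: the claim that $|C|\ge2$ ``bounds $|\zeta|$'' is not literally right for $|C|=2$ (there $\sin\alpha=\pm1+M|\zeta|^{-2}$ with the appropriate sign of $M$ gives a \emph{lower} bound on $|\zeta|$), but the logarithmic divergence of $\phi$ still goes through in that borderline case, so the conclusion survives.

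For $n>2$ the approaches diverge more sharply, and your proposal has the gap you yourself anticipate. The paper (Proposition~\ref{prop41}) does not try to read a global solution off the conservation law; it fixes $C=1-n$, prescribes $v(0)=1$, $v'(0)=1$, and proves by barrier arguments that $v>1$, $v'>0$, and $v-\delta v'>0$ persist on the maximal interval, forcing that interval to be all of $[0,\infty)$. This yields a non-special-Lagrangian radial solution on $\mathbb{R}^n\setminus\{0\}$ in one stroke, and restriction then covers every unbounded subdomain. Your outward integration of the $M=0$ profile from large $|\zeta|$ does produce solutions on $(a,\infty)$ for large $a$, and scaling pushes this to every $a>0$; but scaling alone cannot reach $a=0$, so the case $\Omega=\mathbb{R}^n\setminus\{0\}$ is not covered unless you actually carry out the turning-point analysis at $|\zeta|=(C/2)^{1/(n-2)}$ and show that $r=|\zeta|\cos\phi$ stays monotone and tends to $0$ on the other branch. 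That step is not done, and it is exactly where your sketch ends.
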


Theorem \ref{one} will be proved in two propositions: Proposition \ref{prop1}
\ and Proposition \ref{prop41}. \ 

Radial solutions can be characterized according to their behavior near 0. For
$n=3$, we exhibit a strong solution (a classical solution away from the origin
of $\mathbb{R}^{3}$) the closure of whose gradient graph over the unit 3-ball
$\mathbb{B}^{3}$ is a smoothly embedded submanifold with boundary that is
diffeomorphic to a half cylinder, yet it is not Hamiltonian isotopic to the
gradient graph of any smooth function over $\mathbb{B}^{3}$. In fact, we are
able to show the following:

\begin{thm}
\label{two}Suppose that $u=u\left(  r\right)  $ is a radial solution of the
fourth order Hamiltonian stationary equation (\ref{Hamstat}) on$~\left\{
x\in\mathbb{R}^{n}:0<|x|<b\right\}  .$ If%
\[
\lim_{r\rightarrow0}u'(r)=0
\]
then $u$ is a solution of the special Lagrangian equation and the completed
graph is a flat disk$.$

If%
\[
\lim_{r\rightarrow0}u^{\prime}(r)=c\neq0
\]
then the gradient graph (including the sphere over the origin as its boundary)
is a properly embedded half cylinder.
\end{thm}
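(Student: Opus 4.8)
The plan is to reduce the fourth–order equation for radial $u$ to an autonomous planar system for the \emph{profile curve} of the graph, and then read off the two boundary behaviours from whether that system is singular or regular at the relevant limit. Write $v=u'$. Radial symmetry makes $D^2u$ have eigenvalue $u''=v'$ once and $u'/r=v/r$ with multiplicity $n-1$, so by \eqref{thetadef2} the phase is $\theta=\arctan v'+(n-1)\arctan(v/r)$ and the induced metric satisfies $\sqrt{\det g}=\sqrt{1+(v')^2}\,(1+(v/r)^2)^{(n-1)/2}$. Since $\theta$ is radial, computing $\Delta_g\theta$ in $x$–coordinates collapses \eqref{Hamstat} to the first integral
\[
\frac{\theta'(r)\,(r^2+v^2)^{(n-1)/2}}{\sqrt{1+(v')^2}}=C,
\]
a constant (I expect this is the reduction behind Proposition \ref{prop1}). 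Geometrically the graph is the $SO(n)$–orbit, under the diagonal action on $\mathbb R^n_x\times\mathbb R^n_y$, of the profile curve $\gamma(s)=(r(s),v(s))$ in arc length $s$, with metric $ds^2+\rho^2\,d\omega^2$ and $\rho=|\gamma|$. Writing $\beta$ for the tangent angle and $\psi$ for the polar angle of $\gamma$, one has $\theta=\beta+(n-1)\psi$, $\rho'=\cos(\beta-\psi)$, $\psi'=\sin(\beta-\psi)/\rho$, and the first integral becomes $\rho^{n-1}\,d\theta/ds=C$; setting $\phi:=\beta-\psi$ closes the system as
\[
\rho'=\cos\phi,\qquad \phi'=C\rho^{1-n}-\frac{n\sin\phi}{\rho}.
\]

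For the case $\lim_{r\to0}u'=0$ the curve $\gamma$ limits to the origin, so $\rho\to0$ and the singular term $\rho^{1-n}$ is active. Since $|\theta|\le n\pi/2$ is bounded while $d\theta/ds=C\rho^{1-n}$ has constant sign, $\theta$ is monotone with a finite limit; integrating and using $|d\rho/ds|\le1$ gives $\int_0\rho^{-(n-1)}\,ds\ge\int_0\rho^{-(n-1)}\,d\rho=\infty$ for $n\ge2$, which forces $C=0$, i.e.\ $u$ is special Lagrangian. With $C=0$ the system yields $\sin\phi=K\rho^{-n}$; boundedness as $\rho\to0$ forces $K=0$, hence $\phi\equiv0$ (or $\pi$) and $\psi\equiv\mathrm{const}$, so $\gamma$ is a ray through the origin. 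Thus $v=ar$, $u=\tfrac a2 r^2+\mathrm{const}$, and the completed graph is the flat Lagrangian disk $\{(x,ax)\}$.

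For the case $\lim_{r\to0}u'=c\neq0$ the curve limits to $(0,c)$ and $\rho\to|c|>0$, so the apparently singular factor $\rho^{1-n}$ is harmless and the right–hand sides of the reduced system are \emph{smooth} at the limiting state. Hence $\gamma$, as a trajectory of this autonomous system, extends as a smooth embedded curve through $(0,c)$ with a well–defined tangent. The parametrization $(s,\omega)\mapsto(r(s)\omega,v(s)\omega)$ is injective — for $r>0$ the first factor already separates points, while the boundary sphere $\{(0,c\omega)\}$ lies in $\{x=0\}$ — and it is an immersion up to $s=0$: the tangential directions span the tangent space of the round sphere $\{0\}\times cS^{n-1}$, nonzero precisely because $c\neq0$, and the curve direction is independent of them. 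Therefore the closure of the graph, together with the sphere over the origin, is a properly embedded manifold with boundary diffeomorphic to $[0,b)\times S^{n-1}$, a half cylinder.

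The main obstacle, and the clean dividing line between the two cases, is the regularity of the reduced system at the limit: whether $\rho\to0$ or $\rho\to|c|>0$. In the first case the genuine work is the rigidity chain that upgrades the mere limit $u'\to0$ to flatness, namely proving $C=0$ and then $\phi\equiv\mathrm{const}$, where one must also exclude $\gamma$ meeting the origin tangentially — handled by the same identity $\sin\phi=K\rho^{-n}$. In the second case the crux is the opposite observation, that the singular-looking ODE is in fact regular at the boundary state, so standard smooth dependence on initial data manufactures a smooth collar and hence the half cylinder; the remaining points (injectivity, immersion at the boundary, properness) are then routine, with $c\neq0$ entering exactly through the nondegeneracy of the boundary sphere.
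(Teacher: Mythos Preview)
Your argument is correct, and it takes a genuinely different route from the paper in both halves of the first case.

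For the step $C=0$, the paper argues by contradiction through a rather delicate analytic lemma (Lemma~\ref{technical}): assuming $v\to0$ and $C\neq0$, it first extracts a subsequence with $|v(r_k)|\ge cr_k$, then shows $v$ is eventually of one sign, and finally uses the finiteness of $\int\sqrt{(1+(v')^2)/(r^2+v^2)}\,dr$ together with a telescoping product to force $\lim v\neq0$. Your approach bypasses all of this: after rewriting the first integral as $d\theta/ds=C\rho^{1-n}$ in arc length, the single inequality $|d\rho/ds|\le1$ (Cauchy--Schwarz) yields
\[
\int\rho^{1-n}\,ds\;\ge\;\int\rho^{1-n}\,|\rho'|\,ds\;\ge\;\Bigl|\int\rho^{1-n}\,d\rho\Bigr|\;=\;\infty\quad(n\ge2),
\]
which immediately contradicts the boundedness of $\theta$. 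This is shorter and more geometric; one small point worth making explicit is that the middle inequality does \emph{not} require $\rho$ to be monotone, only the fundamental theorem of calculus applied to $F(\rho)$ with $F'=\rho^{1-n}$.

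For the rigidity step (showing $u$ is quadratic once $C=0$), the paper uses a calibration comparison: it builds a quadratic competitor $u^{(\rho)}$ with the same boundary gradient, observes that both graphs are calibrated, and concludes $\theta=\theta^{(\rho)}$ for every $\rho$, whence $u'(\rho)=\rho\tan(\theta/n)$. Your conservation law $\sin\phi=K\rho^{-n}$, forced to $K=0$ by boundedness as $\rho\to0$, reaches the same conclusion by pure ODE analysis. The calibration argument is more conceptual (it explains \emph{why} the only radial special Lagrangian through the origin is flat) but yours is self-contained and avoids the isotopy/Stokes step.

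For the case $c\neq0$, the paper simply asserts that the extended map $(r,\xi)\mapsto(r\xi,u'(r)\xi)$ is a proper injective immersion. Your observation that the reduced $(\rho,\phi)$--system is \emph{regular} at the limiting state (since $\rho\to|c|>0$) actually supplies what the paper elides: smooth extension of the profile curve to the endpoint, hence a smooth collar, with $c\neq0$ giving nondegeneracy of the boundary sphere. Here your write-up is the more complete of the two.
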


Note that, due to the strong maximum principle applied to (\ref{Hamstat}) on
any $B_{R}(0),$ any radial and harmonic $\theta$ that extends to a weakly
harmonic function across the origin must be constant. \ It follows from a
removable singularity result of Serrin \cite{Serrin} that if a radial solution
$u$ is $C^{1,1}$ near the origin, then $\theta$ must extend to a weakly
harmonic function and must be constant (cf. \cite{CWReg}). The corresponding
potential function $u$ must be quadratic, as we will see in section 5 where we
investigate properties of the ODE and combine them with a calibration argument.

Bernstein type results for special Lagrangian equations in dimension 2 were
obtained by Fu \cite{Fu}: the global solutions are either quadratic
polynomials or harmonic functions. For convex solutions and for large phases
in higher dimensions Bernstein results were given by Yuan \cite{YYI,YY06}. For
Bernstein and Liouville results for (\ref{Hamstat}) with constraints, and more
discussion of the problem, see \cite{Mese}, \cite{Warren2016}.

The rest of the paper is organized as follows. In section 2, we will derive an
ODE that characterizes radial solutions, and prove that the Dirichlet integral
of the phase function is always finite in the radially symmetric case. In
section 3, we show that for $n=2$ all radial solutions on unbounded domains
must be special\ Lagrangian, and write down all radial solutions to special
Lagrangian equations. In section 4, we show that, for $n>2$, non-special
Lagrangian solutions to (\ref{Hamstat}) exist on $\mathbb{R}^{n}%
\backslash\left\{  0\right\}  .$ In section 5, we explore the behavior near
the origin, and show that continuity of the gradient graph implies a radial
solution is special Lagrangian.

\section{The radial Hamiltonian stationary equation}

We are interested in the gradient graph $\Gamma=\left\{  \left(
x,Du(x)\right)  :x\in\mathbb{R}^{n}\backslash\{0\}\right\}  $ of a radially
symmetric function $u$. Suppose $u=u(r)$, then
\[
Du=u^{\prime}(r)Dr.
\]
In terms of the standard parametrization
\begin{align*}
(0,\infty)\times\mathbb{S}^{n-1} &  \rightarrow\mathbb{R}^{n}\backslash\{0\}\\
(r,\xi) &  \rightarrow r\xi
\end{align*}
we can parametrize the gradient graph as
\begin{equation}
(r,\xi)\rightarrow(r\xi,u^{\prime}(r)\,\xi)\in\mathbb{R}^{n}\times
\mathbb{R}^{n}.\label{embedding}%
\end{equation}
Define the following notations for tangent vectors at a point $(r\xi
,u^{\prime}(r)\xi)$ of $\Gamma$
\[
\partial_{r}=(\xi,u^{\prime\prime}(r)\,\xi)\in T_{\left(  r\xi,u^{\prime
}(r)\,\xi\right)  }\Gamma
\]
and
\[
\partial_{V}=(rV,u^{\prime}(r)V)\in T_{\left(  r\xi,u^{\prime}(r)\,\xi\right)
}\Gamma
\]
for each $V\in T_{\xi}\mathbb{S}^{n-1}$. Thus, the induced metric $g$ on
$\Gamma$ may be computed as
\begin{align*}
g_{rr} &  =\langle\partial_{r},\partial_{r}\rangle=1+\left(  u^{\prime\prime
}\right)  ^{2}\\
g_{VV} &  =\langle\partial_{V},\partial_{V}\rangle=r^{2}+\left(  u^{\prime
}\right)  ^{2}\\
g_{V_{i}V_{j}} &  =\langle\partial_{V_{i}},\partial_{V_{j}}\rangle
=0,\,\,\,\,{\forall i\not =j,V_{i}\perp V_{j}},V_{i},V_{j}\in T_{\xi
}\mathbb{S}^{n-1}\\
g_{rV} &  =\langle\partial_{r},\partial_{V}\rangle=0.
\end{align*}
In the $(r,\xi)$-coordinates, the Hamiltonian stationary equation
\[
\sum_{i,j=1}^{n}\frac{1}{\sqrt{g}}\left(  \sqrt{g}g^{ij}\theta_{i}\right)
_{j}=0
\]
reduces to (because only $r$ derivatives persist)
\[
\left(  \sqrt{g}g^{rr}\theta_{r}\right)  _{r}=0.
\]
This directly gives that
\[
\sqrt{g}g^{rr}\theta_{r}=C,
\]
in particular%
\begin{equation}
\theta_{r}=\frac{Cg_{rr}}{\sqrt{g}}=C\sqrt{\frac{1+\left(  u^{\prime\prime
}\right)  ^{2}}{\left(  r^{2}+\left(  u^{\prime}\right)  ^{2}\right)  ^{n-1}%
}.}\label{hs2}%
\end{equation}
To compute $\theta,$ we may diagonalize $D^{2}u$ with respect to the Euclidean
coordinates. Taking $x_{1}=r,x_{2}=0,\dots,x_{n}=0$, then
\[
D^{2}u=\left(
\begin{array}
[c]{cccc}%
u^{\prime\prime} & 0 & ... & 0\\
0 & \frac{u^{\prime}}{r} & 0 & ...\\
... & 0 & ... & 0\\
0 & ... & 0 & \frac{u^{\prime}}{r}%
\end{array}
\right)  .
\]
Thus by (\ref{thetadef2})
\[
\theta=\arctan\left(  u^{\prime\prime}\right)  +\left(  n-1\right)
\arctan\left(  \frac{u^{\prime}}{r}\right)
\]
and
\[
\theta_{r}=\frac{1}{1+\left(  u^{\prime\prime}\right)  ^{2}}u^{\prime
\prime\prime}+\left(  n-1\right)  \frac{1}{1+\left(  \frac{u^{\prime}}%
{r}\right)  ^{2}}\left(  \frac{u^{\prime}}{r}\right)  ^{\prime}.
\]
Finally, in terms of the radial function $u$, the Hamiltonian stationary
equation (\ref{hs2}) becomes
\begin{equation}
\frac{1}{1+\left(  u^{\prime\prime}\right)  ^{2}}u^{\prime\prime\prime
}+\left(  n-1\right)  \frac{1}{1+\left(  \frac{u^{\prime}}{r}\right)  ^{2}%
}\left(  \frac{u^{\prime}}{r}\right)  ^{\prime}=C\sqrt{\frac{1+\left(
u^{\prime\prime}\right)  ^{2}}{\left(  r^{2}+\left(  u^{\prime}\right)
^{2}\right)  ^{n-1}}}.\label{hs3}%
\end{equation}
Note that $C=0$ corresponds to the special Lagrangian equation \eqref{sLag}.

Letting
\[
v=u^{\prime},
\]
the equation (\ref{hs3}) becomes a second order ODE in $v$:%

\begin{equation}
\frac{1}{1+\left(  v^{\prime}\right)  ^{2}}v^{\prime\prime}+\left(
n-1\right)  \frac{v^{\prime}r-v}{r^{2}+v^{2}}-C\frac{\left(  1+\left(
v^{\prime}\right)  ^{2}\right)  ^{\frac{1}{2}}}{\left(  r^{2}+v^{2}\right)
^{\frac{n-1}{2}}}=0. \label{main}%
\end{equation}

Next, we note a consequence of (\ref{hs2}): For any radial solution to the
Hamiltonian stationary equation, the energy of the Lagrangian phase function
$\theta$  admits a uniform upper bound, independent of the size of the domain.

\begin{proposition}
\label{Efinite} Suppose that $u\left(  r\right)  $ is a radial solution of the
fourth order Hamiltonian stationary equation (\ref{Hamstat}) on$~\left\{
x\in\mathbb{R}^{n}:a<\left\vert x\right\vert <b.\right\}  $. Letting
$\Gamma=\left\{  \left(  x,Du(x)\right)  :a<\left\vert x\right\vert
<b\right\}  $, then
\[
\int_{\Gamma}\left\vert \nabla_{g}\theta\right\vert ^{2}dV_{g}=C \, \emph{Vol}
( \mathbb{S}^{n-1}) \left[  \theta(b)-\theta(a)\right]  ,
\]
where $C$ is the constant in (\ref{hs2}). In particular, the energy of
$\theta$ is always finite.
\end{proposition}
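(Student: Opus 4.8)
The plan is to reduce the Dirichlet energy to a one-dimensional integral and then recognize the integrand as an exact radial derivative by way of the first integral $\sqrt{g}\,g^{rr}\theta_r=C$ already established just before (\ref{hs2}).

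First I would record the two structural facts I need from the computations preceding the statement. Since $\theta$ depends only on $r$, the only nonvanishing component of $\nabla_g\theta$ is the radial one, so
\[
|\nabla_g\theta|^2=g^{rr}(\theta_r)^2.
\]
Because the metric is block diagonal with the angular block a multiple of the round metric on $\mathbb{S}^{n-1}$, the Riemannian volume element splits as $dV_g=\sqrt{g}\,dr\,d\sigma$, where $\sqrt{g}=\sqrt{1+(u'')^2}\,(r^2+(u')^2)^{(n-1)/2}$ is precisely the radial factor appearing in (\ref{hs2}) and $d\sigma$ is the standard volume element of $\mathbb{S}^{n-1}$.

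The key step is to multiply the energy density by $\sqrt{g}$ and regroup using the first integral $\sqrt{g}\,g^{rr}\theta_r=C$:
\[
|\nabla_g\theta|^2\,\sqrt{g}=g^{rr}(\theta_r)^2\,\sqrt{g}=\left(\sqrt{g}\,g^{rr}\theta_r\right)\theta_r=C\,\theta_r.
\]
Thus the full integrand collapses to the constant $C$ times the exact radial derivative $\theta_r$. Integrating over the annulus and performing the angular integration, and using that $\theta$ is radial so that $\theta(b)-\theta(a)$ is constant on each sphere, I obtain
\[
\int_{\Gamma}|\nabla_g\theta|^2\,dV_g=\int_{\mathbb{S}^{n-1}}\int_a^b C\,\theta_r\,dr\,d\sigma=C\,\text{Vol}(\mathbb{S}^{n-1})\left[\theta(b)-\theta(a)\right],
\]
which is the claimed identity.

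For the finiteness assertion I would invoke the explicit expression $\theta=\arctan(u'')+(n-1)\arctan(u'/r)$ derived above: each arctangent is bounded in absolute value by $\pi/2$, so $|\theta(b)-\theta(a)|\leq n\pi$, giving a bound on the energy that is independent of $a$ and $b$ (in particular, independent of the size of the domain). There is essentially no analytic obstacle here; the entire content is the observation that the first integral turns the energy density into $C\theta_r$. The only point demanding a moment of care is the bookkeeping that separates the radial factor $\sqrt{g}$ from the angular factor $d\sigma$ in the volume form, so that the sphere contributes exactly $\text{Vol}(\mathbb{S}^{n-1})$ and the radial integral contributes $\theta(b)-\theta(a)$.
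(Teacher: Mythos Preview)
Your proof is correct and follows essentially the same approach as the paper: both use that $\theta$ is radial so $|\nabla_g\theta|^2=g^{rr}\theta_r^2$, then invoke the first integral $\sqrt{g}\,g^{rr}\theta_r=C$ to reduce the energy integrand to $C\theta_r$ and integrate. Your regrouping $|\nabla_g\theta|^2\sqrt{g}=(\sqrt{g}\,g^{rr}\theta_r)\theta_r=C\theta_r$ is slightly more direct than the paper's two-step computation, and your explicit bound $|\theta(b)-\theta(a)|\le n\pi$ makes the finiteness claim more precise than the paper does.
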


\begin{proof}
Using the fact that $\theta$ is radial,
\[
\left\vert \nabla_{g}\theta\right\vert ^{2}=g^{rr}\theta_{r}^{2}.
\]
Now using (\ref{hs2}),
\[
\left\vert \nabla_{g}\theta\right\vert ^{2}= g^{rr}\theta_{r}\theta_{r}%
=g^{rr}\frac{C^{2}g_{rr}^{2}}{g}=\frac{C^{2}g_{rr}}{g}.
\]
Thus
\begin{align*}
\int_{\mathbb{B}_{b}^{n}(0)\backslash\mathbb{B}_{a}^{n}(0)}\left\vert
\nabla_{g}\theta\right\vert ^{2}dV_{g}  &  =\int_{\mathbb{S}^{n-1}}\int%
_{a}^{b}\frac{C^{2}g_{rr}(r)}{g(r)}\sqrt{g(r)}drdV_{S^{n-1}}\\
&  =\mbox{Vol}(\mathbb{S}^{n-1})\int_{a}^{b}\frac{C^{2}g_{rr}(r)}{\sqrt{g(r)}%
}dr.
\end{align*}
On the other hand, by \eqref{hs2} again
\[
\theta(b)-\theta(a)=\int_{a}^{b}\theta_{r}dr=\int_{a}^{b}\frac{Cg_{rr}%
(r)}{\sqrt{g(r)}}dr.
\]
Putting these together finishes the proof.
\end{proof}

\section{Reduction of order for $n=2$}

The equation \eqref{main} always admits a short time solution near $0$ for any
constant $C,$ provided that $v(0)\neq0.$ In particular, the solutions for
$C\not =0$ are not solutions to the special Lagrangian equation for any
$n\geq2$. However, when $n=2$, we will show that the solutions to \eqref{main}
which are defined on any \textit{unbounded} domain in $\mathbb{R}^{2}$ are
necessarily special Lagrangian. In the next section, we will exhibit existence
of solutions on $\mathbb{R}^{n}\backslash\{0\}$ for $n>2$ which are not
special Lagrangian. So the reduction of order for radial solutions, namely
from the fourth order equation \eqref{main} to the second order equation
\eqref{sLag} only happens in dimension 2 and is non-local.

We first investigate the case that the limit of $\theta$ at infinity is not
zero. \ 

\begin{prop}
\label{easy proof} When $n=2$, if $u=u(r)$ is a radial solution on
$(a,\infty)$ to (\ref{hs3}) with
\[
\lim_{r\rightarrow\infty}\theta(r)\neq0
\]
then $C=0$. In particular, $u$ is a solution of the special Lagrangian equation.
\end{prop}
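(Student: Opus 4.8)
The plan is to exploit the special structure that dimension two gives us. When $n=2$, equation (\ref{hs2}) reads $\theta_r = C\sqrt{(1+(u'')^2)/(r^2+(u')^2)}$, and the key feature is that the integral $\int_a^\infty \theta_r\,dr$ equals $\theta(\infty)-\theta(a)$, which by hypothesis is a finite nonzero number minus a finite number, hence finite. So first I would observe that convergence of $\theta$ to a nonzero limit forces $\int_a^\infty \theta_r\,dr$ to converge, and in particular $\theta_r \to 0$; but more usefully, I want to extract a contradiction with $C\neq 0$ by analyzing the growth of the integrand.

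The main idea is to argue by contradiction: suppose $C\neq 0$. In dimension two, $\theta = \arctan(u'') + \arctan(u'/r)$, so $\theta$ is automatically bounded between $-\pi$ and $\pi$. The hypothesis $\lim_{r\to\infty}\theta(r)\neq 0$ tells us $\theta$ settles to some nonzero value, which means both $\arctan(u'')$ and $\arctan(u'/r)$ must converge as $r\to\infty$. The plan is to show these limiting behaviors, combined with $C\neq 0$, contradict the integrability of $\theta_r$. Specifically, I would examine the behavior of $v = u'$ for large $r$. Since $\arctan(u'/r)$ converges, either $u'/r\to$ finite limit or $\pm\infty$; I would want to rule out or control each case. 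If $u'/r$ stays bounded, then for large $r$ the factor $\sqrt{1/(r^2+(u')^2)} \sim \sqrt{1/r^2} = 1/r$ up to constants, and then $\theta_r \gtrsim C\cdot \frac{1}{r}\sqrt{1+(u'')^2} \geq \frac{C}{r}$, whose integral diverges logarithmically — contradicting finiteness of $\int \theta_r\,dr$. So the heart of the argument is showing that the $\sqrt{1+(u'')^2}\geq 1$ bound in the numerator, together with the $1/\sqrt{r^2+(u')^2}$ decay being no faster than $1/r$ (when $u'$ grows at most linearly), produces a non-integrable lower bound on $\theta_r$ whenever $C\neq 0$.

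The main obstacle I anticipate is ruling out the scenario where $u'$ grows superlinearly, making $r^2+(u')^2 \approx (u')^2$ large enough that $\theta_r$ becomes integrable despite $C\neq 0$. To handle this I would use that $\arctan(u'/r)$ must converge (forced by convergence of $\theta$ and boundedness of the $\arctan$ terms): if $u'/r\to\pm\infty$ then $\arctan(u'/r)\to\pm\pi/2$, which is permissible, so I cannot immediately exclude superlinear growth from the phase alone. The resolution should come from examining what $\arctan(u'')$ does in tandem — the two arctangent terms are not independent because $u''$ and $u'/r$ are linked through the ODE (\ref{main}). I expect the cleanest route is to track the sign and monotonicity of $\theta_r$: since $C\neq 0$, $\theta_r$ has a fixed sign, so $\theta$ is strictly monotone, and I would combine this with the explicit form $\theta = \arctan(u'')+\arctan(u'/r)$ and the relation (\ref{hs2}) to pin down the asymptotics of $u'$ and $u''$ precisely enough to force $\int \theta_r\,dr$ to diverge, contradicting the assumed finite nonzero limit of $\theta$. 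The dimension-two specificity enters crucially here, since the exponent $(n-1)/2$ on $(r^2+(u')^2)$ equals $1/2$, giving exactly the borderline $1/r$ decay that makes the integral diverge.
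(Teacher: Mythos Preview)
Your approach is genuinely different from the paper's. The paper does \emph{not} attempt a direct ODE analysis for this proposition. Instead, it observes that $\theta>\delta/2$ for large $r$ forces the Hessian bound $D^{2}u>\tan(\delta/2-\pi/2)$, then applies a Lewy--Yuan unitary rotation of $\mathbb{C}^{2}$ to produce a new radial potential $\bar u$ with \emph{bounded} Hessian on an exterior domain. In the rotated picture the induced metric is uniformly comparable to the Euclidean one, so annular volumes grow like $R^{2}$; a standard cutoff argument together with the finite Dirichlet energy of $\theta$ (Proposition~\ref{Efinite}) then forces $\theta_{\bar r}\equiv 0$. None of your ODE asymptotics enter.

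Your plan, by contrast, is in the spirit of the paper's proof of the \emph{next} proposition (Proposition~\ref{prop1}, the case $\lim\theta=0$), and the easy half of your argument is exactly right: if $u'/r$ stays bounded, then $\theta_r\geq |C|/(r\sqrt{1+M^2})$, which is not integrable on $(a,\infty)$. But there is a real gap in the hard half. First, the claim that convergence of $\theta$ forces both $\arctan(u'')$ and $\arctan(u'/r)$ to converge individually is unjustified and not needed. More importantly, your stated expectation for the superlinear case---``pin down the asymptotics of $u'$ and $u''$ precisely enough to force $\int\theta_r\,dr$ to diverge''---is not how the contradiction arises. In the superlinear regime $|u'/r|\to\infty$ the integral $\int\theta_r\,dr$ genuinely can remain finite; the contradiction must come from elsewhere.

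A route that does work (and essentially mirrors the paper's argument for Proposition~\ref{prop1}) is the following. Assuming $C>0$ and, say, $\theta\nearrow\delta>0$, one gets $\arctan(u'')+\arctan(u'/r)>\delta/2>0$ for large $r$, hence $u''+u'/r>0$, i.e.\ $(ru')'>0$. Combined with your bounded-case argument, $u'/r$ must be unbounded above, and $u'>0$ eventually. Now set $q(r)=\operatorname{arcsinh}(u'/r)$; unboundedness of $u'/r$ gives $q(s_k)\to+\infty$ along a sequence, while
\[
\left|\int \frac{u''/r}{\sqrt{1+(u'/r)^2}}\,dr\right|\le \frac{1}{C}\int\theta_r\,dr<\infty,
\]
so writing $q'=\dfrac{u''/r-u'/r^2}{\sqrt{1+(u'/r)^2}}$ forces $\displaystyle\int\frac{u'/r^2}{\sqrt{1+(u'/r)^2}}\,dr\to-\infty$, contradicting $u'>0$. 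The case $\delta<0$ is symmetric. So your elementary strategy is salvageable, but the missing ingredient is this $\operatorname{arcsinh}$ identity and the resulting \emph{sign} contradiction, not a divergence of $\int\theta_r$.
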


\begin{proof}
Recall that $\theta$ is bounded and monotone by (\ref{hs2}), so $\theta$ has a
finite limit when $r\rightarrow\infty$. Without loss of generality we may take
$\lim_{r\rightarrow\infty}\theta(r)=\delta>0.$ \ In particular, we may choose
$R_{0}$ large enough so that
\[
\theta(r)>\frac{\delta}{2},\ \text{ for }r>R_{0}.
\]
Letting%
\[
\Omega=\mathbb{R}^{2}\backslash\mathbb{B}_{R_{0}}^{2}(0)\subset\mathbb{R}%
^{2}\backslash\mathbb{B}_{a}^{2}(0)
\]
we consider the portion of the gradient graph of $u$ restricted to $\Omega,$
namely
\[
\Gamma=\left\{  \left(  x,Du(x)\right)  :x\in\mathbb{R}^{2}\backslash
\mathbb{B}_{R_{0}}^{2}(0)\right\}  .
\]
Now for $\lambda_{1},\lambda_{2},$ the eigenvalues of $D^{2}u$, we have
\[
\arctan\lambda_{1}+\arctan\lambda_{2}>\frac{\delta}{2}.
\]
So
\[
D^{2}u(x)>\tan\left(  \frac{\delta}{2}-\frac{\pi}{2}\right)  >-\infty
\]
when $x\in\Omega$. Now we may apply the Lewy-Yuan rotation argument in
\cite[Step 1 in section 2]{YY06}, (cf. \cite[Proposition 4.1]{CWReg}). \ In
particular, there is a domain $\bar{\Omega}\subset\mathbb{R}^{n},$ and a
function $\bar{u}$ on $\bar{\Omega}$ such that the surface $\bar{\Gamma}$
$\subset\mathbb{R}^{2}+\sqrt{-1}\mathbb{R}^{2}$ defined by
\[
\bar{\Gamma}=\left\{  \left(  \bar{x},D\bar{u}(\bar{x}\right)  ):\bar{x}%
\in\bar{\Omega}\right\}
\]
is isometric to $\Gamma$ via a unitary rotation of $%
\mathbb{C}
^{2}.$ \ Expressly, letting
\begin{equation}
\bar{x}(x)=\cos\left(  \frac{\delta}{4}\right)  x+\sin\left(  \frac{\delta}%
{4}\right)  Du(x)\label{ff1}%
\end{equation}
we have%
\[
\bar{\Omega}=\left\{  \bar{x}(x):x\in\mathbb{R}^{2}\backslash\mathbb{B}%
_{R_{0}}^{2}(0)\right\}  .
\]
Also, when $\bar{x}=\bar{x}(x),$
\begin{equation}
D\bar{u}\left(  \bar{x}\right)  =-\sin\left(  \frac{\delta}{4}\right)
x+\cos\left(  \frac{\delta}{4}\right)  Du(x).\label{ff2}%
\end{equation}
When $u$ is radial, the derivative satisfies
\[
Du(x)=u^{\prime}(\left\vert x\right\vert )\frac{x}{\left\vert x\right\vert }%
\]
so (\ref{ff1}) becomes%
\begin{align}
\bar{x}(x) &  =\cos\left(  \frac{\delta}{4}\right)  x+\sin\left(  \frac
{\delta}{4}\right)  u^{\prime}(\left\vert x\right\vert )\frac{x}{\left\vert
x\right\vert }\\
&  =\left(  \cos\left(  \frac{\delta}{4}\right)  +\sin\left(  \frac{\delta}%
{4}\right)  \frac{u^{\prime}(\left\vert x\right\vert )}{\left\vert
x\right\vert }\right)  x
\end{align}
and (\ref{ff2}) becomes%
\begin{align}
D\bar{u}\left(  \bar{x}\right)   &  =\left(  -\sin\left(  \frac{\delta}%
{4}\right)  +\cos\left(  \frac{\delta}{4}\right)  \frac{u^{\prime}(\left\vert
x\right\vert )}{\left\vert x\right\vert }\right)  x\\
&  =\left(  -\sin\left(  \frac{\delta}{4}\right)  +\cos\left(  \frac{\delta
}{4}\right)  \frac{u^{\prime}(\left\vert x\right\vert )}{\left\vert
x\right\vert }\right)  \left(  \cos\left(  \frac{\delta}{4}\right)
+\sin\left(  \frac{\delta}{4}\right)  \frac{u^{\prime}(\left\vert x\right\vert
)}{\left\vert x\right\vert }\right)  ^{-1}\bar{x}.
\end{align}
We conclude that $D\bar{u}(\bar{x})$ is a multiple of $\bar{x},$ so the
function $\bar{u}$ is radial in the $\bar{x}$-coordinates.

Under this representation, we have (cf. \cite[Proposition 4.1]{CWReg}) that
the inverse map $\bar{x}^{-1}$ exists and is Lipschitz, that is
\[
D\bar{x}\geq c_{\delta}I_{2}>0.
\]
It follows that the complement of $\bar{\Omega}$ must be contained in a
compact set: \
\[
\mathbb{R}^{2}\backslash\bar{\Omega}\subset\mathbb{B}_{\bar{R}_{0}}^{2}(0)
\]
for some $\bar{R}_{0}$. Further, we have that
\[
\tan\left(  \frac{\delta}{2}-\frac{\pi}{2}-\frac{\delta}{4}\right)  <D^{2}%
\bar{u}<\tan\left(  \frac{\pi}{2}-\frac{\delta}{4}\right)  .
\]
The induced metric on the gradient graph of $\bar{u}$ is still $g$ because the
two gradient graphs are isometric. \ On the gradient graph of $\bar{u}$, the
metric $g$ is given in terms of the $\bar{x}$-coordinates by%
\[
g=g_{\bar{\imath}\bar{j}}d\bar{x}^{i}d\bar{x}^{j}%
\]
with
\[
g_{\bar{\imath}\bar{j}}=\delta_{\bar{\imath}\bar{j}}+\bar{u}_{\bar{\imath}%
\bar{k}}\delta^{\bar{k}\bar{l}}\bar{u}_{\bar{l}\bar{j}}.
\]
Thus, in these coordinates we have
\[
d\bar{x}^{2}\leq g\leq C_{1}d\bar{x}^{2}.
\]
The volume form has the expression
\[
dV_{g}=\sqrt{g}\ d\bar{x}_{1}\wedge d\bar{x}_{2},
\]
so for any $R>\bar{R}_{0},$ if we define
\[
\Gamma_{R}:=\left\{  \left(  \bar{x},D\bar{u}(\bar{x}\right)  ):\bar{x}\in
\bar{\Omega},\left\vert \bar{x}\right\vert <R\right\}  \subset\bar{\Gamma},
\]
then we have%
\[
\mbox{Vol}(\Gamma_{R})=\int_{\Gamma_{R}}dV_{g}\leq C_{2}R^{2}.
\]
\ \ Now because $\bar{u}$ is radial, so is the function $\bar{\theta},$ where
\[
\bar{\theta}=\arctan\left(  \bar{\lambda}_{1}\right)  +\arctan\left(
\bar{\lambda}_{2}\right)
\]
for $\bar{\lambda}_{1},\bar{\lambda}_{2}$ eigenvalues of $D^{2}\bar{u}.$ \ One
can check (cf. \cite[pg. 20]{CWReg}) that
\[
\bar{\theta}=\theta-\frac{\delta}{2}%
\]
so $\bar{\theta}$ is still harmonic with respect to the metric $g$ on
$\bar{\Gamma}$ and must satisfy an ordinary differential equation of the form
(\ref{hs2}).

Now choose any interval $\left[  a,b\right]  $ with $a>\bar{R}_{0}$, and some
larger $R$. Taking $\bar{x}$ as coordinates for $\bar{\Gamma}$, we can define
the function
\[
\rho\left(  \bar{x}\right)  =\left\vert \bar{x}\right\vert .
\]
We then define a Lipschitz test function $\eta(\rho\left(  \bar{x}\right)  )$
on $\bar{\Gamma}$ as follows:
\begin{align*}
\eta &  =0\ \ \ \ \ \ \ \ \ \ \ \text{if }\rho<a\\
\eta^{\prime} &  =\frac{1}{b-a}\ \ \ \ \text{if }a\leq\rho\leq b\\
\eta &  =1\ \ \ \ \ \ \ \ \ \ \ \text{if }b\leq\rho\leq b+R\\
\eta^{\prime} &  =-\frac{{1}}{R}\ \ \ \ \ \ \ \text{if }b+R\leq\rho\leq b+2R\\
\eta &  =0\ \ \ \ \ \ \ \ \ \ \ \text{if }\rho>b+2R.
\end{align*}
We integrate (\ref{Hamstat}) by parts and replace $\bar{\theta}$ by
$\theta-\frac{\delta}{2}$:
\[
0=\int_{\bar{\Gamma}}\eta\Delta_{g}\theta\,dV_{g}=-\int_{\bar{\Gamma}}%
\langle\nabla_{g}\theta,\nabla_{g}\eta\rangle dV_{g}.
\]
Thus
\begin{align*}
\int_{\Gamma_{b}\backslash\Gamma_{a}}\langle\nabla_{g}\theta,\nabla_{g}%
\eta\rangle dV_{g} &  =-\int_{\Gamma_{b+2R}\backslash\Gamma_{b+R}}%
\langle\nabla_{g}\theta,\nabla_{g}\eta\rangle dV_{g}\\
&  \leq\left(  \int_{\Gamma_{b+2R}\backslash\Gamma_{b+R}}\left\vert \nabla
_{g}\theta\right\vert ^{2}dV_{g}\right)  ^{1/2}\left(  \int_{\Gamma
_{b+2R}\backslash\Gamma_{b+R}}\left\vert \nabla_{g}\eta\right\vert ^{2}%
dV_{g}\right)  ^{1/2}\\
&  \leq\left(  \int_{\Gamma\backslash\Gamma_{b+R}}\left\vert \nabla_{g}%
\theta\right\vert ^{2}dV_{g}\right)  ^{1/2}\frac{1}{R}\left(
\mbox{Vol}(\text{ }\Gamma_{b+2R}\backslash\Gamma_{b+R})\right)  ^{1/2}\\
&  \leq\left(  \int_{\Gamma\backslash\Gamma_{b+R}}\left\vert \nabla_{g}%
\theta\right\vert ^{2}dV_{g}\right)  ^{1/2}C_{2}.
\end{align*}
Now
\[
\lim_{R\rightarrow\infty}\left(  \int_{\Gamma\backslash\Gamma_{b+R}}\left\vert
\nabla_{g}\theta\right\vert ^{2}dV_{g}\right)  ^{1/2}=0
\]
as the energy of $\theta$ is finite by Theorem \ref{Efinite}. We conclude
that
\[
\int_{\Gamma_{b}\backslash\Gamma_{a}}\langle\nabla_{g}\theta,\nabla_{g}%
\eta\rangle dV_{g}=0.
\]
But%
\[
\int_{\Gamma_{b}\backslash\Gamma_{a}}\langle\nabla_{g}\theta,\nabla_{g}%
\eta\rangle dV_{g}=\frac{1}{b-a}\int_{\Gamma_{b}\backslash\Gamma_{a}}%
g^{\bar{r}\bar{r}}\theta_{\bar{r}}dV_{g}.
\]
Because $\theta$ is radial in $\bar{x}$, and the above two lines are true for
every $b>a>R_{0}$, we conclude that
\[
\theta_{\bar{r}}\equiv0.
\]
It follows that $\theta$ and $\bar{\theta}$ are both constant, and both
$\Gamma$ and  $\bar{\Gamma}$ are special Lagrangian. The potential function
$u$ must be a solution to the special Lagrangian equation. \ 
\end{proof}

In light of Proposition \ref{easy proof}, to show any radial solution to
\eqref{hs3} over $(a,\infty)$ is special Lagrangian, it suffices to consider
the remaining case $\lim_{r\to\infty}\theta(r)=0$.

\begin{prop}
\label{prop1} When $n=2,$ any radial solution on $(a,\infty)$ to (\ref{hs3})
must be special Lagrangian.
\end{prop}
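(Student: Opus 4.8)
The plan is to dispose of the one case left open by Proposition~\ref{easy proof}, namely $\lim_{r\to\infty}\theta(r)=0$, and to show that then the constant $C$ of \eqref{hs2} must vanish. Replacing $u$ by $-u$ if necessary (this sends $C\mapsto -C$ and $\theta\mapsto-\theta$, and preserves the hypothesis $\theta(\infty)=0$), I may assume $C>0$; then $\theta_r>0$ by \eqref{hs2}, so $\theta$ increases strictly to $0$ and hence $\theta<0$ on $(a,\infty)$. The whole argument hinges on the finiteness of the total variation of $\theta$: since $\theta$ is monotone with finite limit,
\[
\int_a^\infty \theta_r\,dr=-\theta(a)<\infty ,
\]
and because $1+(u'')^2\ge 1$, the case $n=2$ of \eqref{hs2} gives $\theta_r\ge C/\sqrt{r^2+(u')^2}$, so that
\[
\int_a^\infty\frac{dr}{\sqrt{r^2+(u')^2}}\le\frac{1}{C}\int_a^\infty\theta_r\,dr=-\frac{\theta(a)}{C}<\infty .
\]
Hence it suffices to prove that $u'/r$ stays bounded as $r\to\infty$: if $|u'/r|\le M$ for large $r$ then $\sqrt{r^2+(u')^2}\le r\sqrt{1+M^2}$, and the left-hand integral would dominate a positive multiple of $\int^\infty dr/r=\infty$, a contradiction forcing $C=0$.

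To control $u'/r$ I would pass to the phase variables. Writing $\phi=\arctan(u'/r)$, so that $\theta=\arctan(u'')+\phi$ with $\arctan(u'')=\theta-\phi\in(-\pi/2,\pi/2)$, and taking $s=\log r$ as the independent variable, a direct computation from \eqref{hs2} together with $(u'/r)'=(u''-u'/r)/r$ gives
\[
\frac{d\theta}{ds}=\frac{C\cos\phi}{\cos(\theta-\phi)},\qquad
\frac{d\phi}{ds}=\frac{\cos\phi\,\sin(\theta-2\phi)}{\cos(\theta-\phi)} .
\]
Since $\theta$ is strictly monotone (as $C\neq0$), I may use $\theta$ itself as the independent variable on the bounded interval $(\theta(a),0)$; dividing the two relations yields the clean first-order equation
\[
\frac{d\phi}{d\theta}=\frac{1}{C}\,\sin(\theta-2\phi).
\]
In particular $\lvert d\phi/d\theta\rvert\le 1/C$ on a bounded $\theta$-interval, so $\phi$ is Lipschitz in $\theta$ and extends continuously to $\theta=0$, with a limit $\phi_0\in[-\pi/2,\pi/2]$ as $\theta\to0^-$.

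The crux, and the step I expect to be the main obstacle, is to rule out $\phi_0=\pm\pi/2$, that is, to prevent $u'/r$ from blowing up. This I would do with a barrier argument that exploits the smoothness constraint $\theta-\phi\in(-\pi/2,\pi/2)$. Near $\phi=\pi/2$ write $\phi=\pi/2-\varepsilon$; the constraint $\theta-\phi>-\pi/2$ forces $\varepsilon>|\theta|$, whence $\theta+2\varepsilon>0$, while $\phi>0$ gives $\theta+2\varepsilon=\theta+\pi-2\phi<\pi$. Since $\sin(\theta-2\phi)=-\sin(\theta+2\varepsilon)$, we conclude $d\phi/d\theta<0$ throughout the region $\phi>0$. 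Thus if $\phi_0=\pi/2$ then $\phi$ would be strictly decreasing in $\theta$ on an interval $(\theta_2,0)$ where $\phi>0$, forcing $\phi_0\le\phi(\theta_2)<\pi/2$, a contradiction. An analogous computation near $\phi=-\pi/2$ (setting $\phi=-\pi/2+\zeta$ and using $\sin(\theta-2\phi)=-\sin(\theta-2\zeta)<0$ for small $\zeta$) gives $d\phi/d\theta>0$ there, excluding $\phi_0=-\pi/2$.

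Once $\phi_0\in(-\pi/2,\pi/2)$ is established, $u'/r=\tan\phi$ is bounded for all large $r$, which contradicts the integrability derived in the first paragraph; therefore $C=0$, and $u$ solves the special Lagrangian equation \eqref{sLag}. The delicate point throughout is the interplay between the hard constraint $\theta-\phi\in(-\pi/2,\pi/2)$ coming from smoothness of the solution and the sign of the right-hand side of the phase equation: it is precisely this constraint that pins the sign of $d\phi/d\theta$ at the two corners and converts the a priori possibility of blow-up of $u'/r$ into a uniform bound.
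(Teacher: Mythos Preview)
Your argument is correct and takes a genuinely different route from the paper. Both proofs share the same opening reduction (via Proposition~\ref{easy proof}, normalize to $C>0$, $\theta\nearrow 0$, and observe that finiteness of $\int\theta_r\,dr$ together with \eqref{hs2} for $n=2$ forces $u'/r$ to be unbounded) and the same endgame (a bound on $u'/r$ contradicts that integrability). They differ in how the bound on $u'/r$ is obtained. The paper works directly with the potential: from $\theta<0$ it extracts $(ru')'<0$, hence $u'\le 0$ for large $r$, and then uses $q=\operatorname{arcsinh}(u'/r)$ and an integral identity for $q'$ to contradict the divergence of $q$ along a subsequence. Your approach is cleaner and more structural: passing to the phase variable $\phi=\arctan(u'/r)$ collapses \eqref{hs2} into the single scalar equation $d\phi/d\theta=C^{-1}\sin(\theta-2\phi)$ on a bounded $\theta$-interval; the Lipschitz bound $|d\phi/d\theta|\le 1/C$ immediately produces a finite limit $\phi_0$, and a short sign analysis (using that the smoothness constraint $\theta-\phi\in(-\pi/2,\pi/2)$ forces $\theta-2\phi\in(-\pi,0)$ whenever $\phi>0$, and $\theta-2\phi\in(0,\pi)$ when $\phi$ is near $-\pi/2$ and $\theta$ near $0$) traps $\phi_0$ strictly inside $(-\pi/2,\pi/2)$. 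One small slip: in the $\phi_0=-\pi/2$ case you write $-\sin(\theta-2\zeta)<0$ and then conclude $d\phi/d\theta>0$; in fact $\theta-2\zeta$ is small and negative, so $-\sin(\theta-2\zeta)>0$, which is consistent with (and is what yields) your stated conclusion.
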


\begin{proof}
It suffices to show $C=0$. For $n=2$ the equation (\ref{hs2}) \ is simply%
\begin{equation}
\label{n=2}\theta_{r}=C\frac{1}{r}\sqrt{\frac{1+\left(  u^{\prime\prime
}\right)  ^{2}}{1+\left(  \frac{u^{\prime}}{r}\right)  ^{2}}}.
\end{equation}
Because $\theta$ stays finite, we have that
\begin{equation}
-\pi<\int_{a}^{\infty}C\frac{1}{r}\sqrt{\frac{1+\left(  u^{\prime\prime
}\right)  ^{2}}{1+\left(  \frac{u^{\prime}}{r}\right)  ^{2}}}dr<\pi.
\label{thetaprime}%
\end{equation}

We proceed by contradiction to show $C=0$. Without loss of generality, we may
assume that $C>0$, hence $\theta$ is increasing. If $C<0,$ note that the left
hand side of (\ref{hs2}) is odd in $u$, while the right hand side is even.
Thus we may replace $u$ with $-u$ to obtain $C>0.$

In view of Proposition \ref{easy proof}, we only need to consider the case
that $\lim_{r\rightarrow\infty}\theta=0$. Note that $\theta<0$ because
$\theta$ is increasing in $r$ (as $C>0)$ and $\theta$ vanishes at infinity.

It follows from
\[
\theta=\arctan(u^{\prime\prime})+\arctan\left(  \frac{u^{\prime}}{r}\right)
<0
\]
that
\[
u^{\prime\prime}+\frac{u^{\prime}}{r}<0.
\]
Now
\[
\left(  ru^{\prime}\right)  ^{\prime}<0
\]
from which it follows that for any $t_{0}$
\begin{equation}
ru^{\prime}\leq t_{0}u^{\prime}(t_{0})\text{ for }r>t_{0.} \label{uprimebound}%
\end{equation}
First note that $u^{\prime}/r$ must be unbounded. If not, the integral
expression in (\ref{thetaprime}) would be unbounded. It follows that there is
a sequence of $r_{k}$ for which $\left\vert {u^{\prime}(r_{k})}/{r_{k}%
}\right\vert \rightarrow\infty$ as $k\rightarrow\infty$. Noting
(\ref{uprimebound}) we must have ${u^{\prime}(r_{k})}/{r_{k}\rightarrow
-\infty.}$ We conclude from (\ref{uprimebound}) that there is $r_{0}$ so that
\begin{equation}
u^{\prime}\leq0, \ \ \text{ for }r>r_{0}. \label{tocontradict}%
\end{equation}
Now letting
\[
q(r)=\operatorname{arcsinh}\left(  \frac{u^{\prime}(r)}{r}\right)
\]
we conclude
\[
q(r_{k})\text{ }\rightarrow-\infty,\,\,\,\,\,\text{as $k\rightarrow\infty$. }%
\]
Computing the derivative,
\[
q^{\prime}=\frac{1}{\sqrt{1+\left(  \frac{u^{\prime}}{r}\right)  ^{2}}}\left(
\frac{u^{\prime\prime}}{r}-\frac{u^{\prime}}{r^{2}}\right)  .
\]
Now because $q$ is unbounded below, for any given large $M_{k}>0$, we can find
an interval $[a+1,s_{k}]\subset(a,\infty)$ such that
\[
\int_{a+1}^{s_{k}}q^{\prime}(r)dr=q(s_{k})-q(a+1)=-M_{k}.
\]
It follows that
\begin{equation}
\int_{a+1}^{s_{k}}\frac{1}{\sqrt{1+\left(  \frac{u^{\prime}}{r}\right)  ^{2}}%
}\frac{u^{\prime\prime}}{r}dr=-M_{k}+\int_{a+1}^{s_{k}}\frac{1}{\sqrt
{1+\left(  \frac{u^{\prime}}{r}\right)  ^{2}}}\frac{u^{\prime}}{r^{2}}dr.
\label{u"}%
\end{equation}
Now%
\begin{align}
\left\vert \int_{a+1}^{s_{k}}\frac{1}{\sqrt{1+\left(  \frac{u^{\prime}}%
{r}\right)  ^{2}}}\frac{u^{\prime\prime}}{r}dr\right\vert  &  \leq\left\vert
\int_{a+1}^{s_{k}}\frac{1}{r}\sqrt{\frac{1+\left(  u^{\prime\prime}\right)
^{2}}{1+\left(  \frac{u^{\prime}}{r}\right)  ^{2}}}dr\right\vert
\label{finite}\\
&  =\frac{1}{C}\left\vert \int_{a+1}^{s_{k}}\theta_{r}\,dr\right\vert
\nonumber\\
&  =\frac{1}{C}\left\vert \theta(s_{k})-\theta(a+1)\right\vert \nonumber\\
&  <+\infty,\nonumber
\end{align}
using \eqref{n=2}. Taking $M_{k}\rightarrow\infty$ we conclude from \eqref{u"}
and \eqref{finite} that
\[
\lim_{s_{k}\rightarrow\infty}\int_{a+1}^{s_{k}}\frac{1}{\sqrt{1+\left(
\frac{u^{\prime}}{r}\right)  ^{2}}}\frac{u^{\prime}}{r^{2}}\,dr=+\infty.
\]
Clearly, for this to happen we must have $u^{\prime}>0$ for at least a
sequence of $r\rightarrow\infty.$ But this contradicts (\ref{tocontradict}).

We conclude that $C=0$ and $\theta_{r}=0.$ Thus $u$ solves the special
Lagrangian equation.
\end{proof}

\begin{rem}
{{For $n=2$, the special Lagrangian equation can be written as
\[
\left(  1-u_{11}u_{22}+u_{12}^{2}\right)  \sin\theta+\left(  u_{11}%
+u_{22}\right)  \cos\theta=0
\]
where $u_{ij}$ stands for the second order derivative of $u$ in $x_{i},x_{j}$.
When $u$ is radial, the above equation takes the form
\[
\left(  1-\frac{u^{\prime}u^{\prime\prime}}{r}\right)  \sin\theta+\left(
u^{\prime\prime}+\frac{u^{\prime}}{r}\right)  \cos\theta=0.
\]
This can also be written as
\[
\left(  r^{2}-(u^{\prime})^{2}\right)  ^{\prime}\sin\theta+2(ru^{\prime
})^{\prime}\cos\theta=0.
\]
Completing the square and integrating}}%
\[
\int_{a}^{r}\left[  \left(  u^{^{\prime}}(t)-t\cot\theta\right)  ^{2}%
-t^{2}\csc^{2}\theta\right]  ^{\prime}dt=0
\]
we get
\[
\left(  u^{^{\prime}}(r)-r\cot\theta\right)  ^{2}-r^{2}\csc^{2}\theta=\left(
u^{^{\prime}}(a)-a\cot\theta\right)  ^{2}-a^{2}\csc^{2}\theta
\]
or%
\[
\left(  u^{^{\prime}}(r)-r\cot\theta\right)  ^{2}=\csc^{2}\theta\left[
r^{2}+\beta_{a,u,\theta}\right]
\]
for
\[
\beta_{a,u,\theta}=\sin^{2}\theta\left[  u^{^{\prime}}(a)\right]  ^{2}%
-a^{2}\sin^{2}\theta-2a\cos\theta\sin\theta.
\]
{{Thus
\[
u^{\prime}(r)=\frac{1}{\sin\theta}\left(  r\cos\theta\pm\sqrt{r^{2}%
+\beta_{a,u,\theta}}\right)  .
\]
Integrating }}%
\begin{align*}
\int_{a}^{r}u^{\prime}(t)dt &  =\frac{\cot\theta}{2}\left(  r^{2}%
-a^{2}\right)  \pm\csc\theta\int_{a}^{r}\sqrt{t^{2}+\beta_{a,u,\theta}}dt\\
&  =\frac{\cot\theta}{2}\left(  r^{2}-a^{2}\right)  +\csc\theta\frac{1}%
{2}\left(  r\sqrt{r^{2}+\beta}-a\sqrt{a^{2}+\beta}\right)  +\frac{\beta}%
{2}\csc\theta\ln\left(  \frac{r+\sqrt{r^{2}+\beta}}{a+\sqrt{a^{2}+\beta}%
}\right)  .
\end{align*}
Thus
\begin{equation}
u(r)=u(a)+\frac{\cot\theta}{2}(r^{2}-a^{2})\pm\frac{\csc\theta}{2}\left(
r\sqrt{r^{2}+\beta}-a\sqrt{a^{2}+\beta}+\beta\ln\left(  \frac{r+\sqrt
{r^{2}+\beta}}{a+\sqrt{a^{2}+\beta}}\right)  \right)  \,.\label{2dform}%
\end{equation}
Therefore, according to Proposition \ref{prop1}, all radial solutions to
\eqref{Hamstat} defined on an unbounded domain $\left\{  x\in\mathbb{R}%
^{2}:|x|\geq a\right\}  $ are given by the above formula (\ref{2dform}) explicitly.
\end{rem}



\section{Existence of non special Lagrangian solutions on unbounded domain for
$n>2$}

Observe that one instance of the equation (\ref{main}), i.e. $C=1-n$, can be
written as
\begin{equation}
F_{\lambda}(v)=0 \label{F-lambda}%
\end{equation}
where
\[
\lambda=n-1
\]
and
\begin{equation}
F_{\lambda}(v)=\frac{1}{1+\left(  v^{\prime}\right)  ^{2}}v^{\prime\prime
}+\lambda\frac{v^{\prime}r-v}{r^{2}+v^{2}}+\lambda\frac{\left(  1+\left(
v^{\prime}\right)  ^{2}\right)  ^{1/2}}{\left(  r^{2}+v^{2}\right)
^{\lambda/2}}. \label{flambda}%
\end{equation}
Note that solutions to \eqref{F-lambda} cannot be special Lagrangian since
$C\not =0$.

\begin{prop}
\label{prop41}For $\lambda\geq2,$ the equation (\ref{F-lambda}) admits
nontrivial solutions on $\left(  0,\infty\right)  .$
\end{prop}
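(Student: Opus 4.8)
The plan is to trade the fourth-order equation, now reduced to the second-order ODE $F_\lambda(v)=0$, for a first-order autonomous planar system by parametrizing the profile curve $\gamma(r)=(r,v(r))\subset\mathbb R^2$ by its arclength $s$. Writing $\rho=\sqrt{r^2+v^2}$ for the distance to the origin, $\psi=\arctan v'$ for the angle of the unit tangent, $\phi=\arctan(v/r)$ for the polar angle, and $\alpha=\psi-\phi$ for the angle between the tangent and the radial direction, the defining identity $\theta=\psi+\lambda\phi$ together with \eqref{hs2} (in the form $\theta_r=-\lambda\,\rho^{-\lambda}\sqrt{1+(v')^2}$, which is exactly the case $C=-\lambda$) yields, after dividing by $\sqrt{1+(v')^2}$,
\[
\frac{d\rho}{ds}=\cos\alpha,\qquad \frac{d\alpha}{ds}=-\frac{\lambda}{\rho^{\lambda}}-(\lambda+1)\frac{\sin\alpha}{\rho},\qquad \frac{d\phi}{ds}=\frac{\sin\alpha}{\rho}.
\]
The first two equations decouple from $\phi$ and form a smooth autonomous system on the half plane $\{\rho>0\}$; this is the payoff of passing to arclength, since a vertical tangent of the graph (where $v'\to\pm\infty$) is now a perfectly regular point of the flow. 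I would solve this system and recover $v(r)$ from $r=\rho\cos\phi$, $v=\rho\sin\phi$, provided the trajectory remains a graph, i.e. provided $\psi=\alpha+\phi\in(-\pi/2,\pi/2)$ throughout.

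First I would fix the data. Since $r^2+v^2\ge c^2>0$ in a neighbourhood whenever $v(0)=c\neq0$, the right-hand side of $F_\lambda(v)=0$, solved for $v''$, is smooth near $r=0$, so the initial value problem $v(0)=c>0$, $v'(0)=0$ has a unique local solution by Picard; this corresponds to launching the flow from $(\rho,\alpha,\phi)=(c,-\tfrac{\pi}{2},\tfrac{\pi}{2})$ at $s=0$, i.e. from the point $(r,v)=(0,c)$ with horizontal tangent. (For $\lambda=2$ and $c=1$ this is exactly the explicit solution $v\equiv1$, whose graph is the half-cylinder of Theorem~\ref{two}.)

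The heart of the matter is a trapping-region argument in the $(\rho,\alpha)$ plane showing the trajectory is defined and stays a graph for all $s\ge0$. The $\alpha$-nullcline is $\sin\alpha=-\tfrac{\lambda}{\lambda+1}\rho^{-(\lambda-1)}$, a curve rising from $(\rho_*,-\tfrac{\pi}{2})$ to $(\infty,0)$ with $\rho_*=(\tfrac{\lambda}{\lambda+1})^{1/(\lambda-1)}$; here the hypothesis $\lambda>1$ is used, both to make $\rho_*$ finite and to make $\rho^{-(\lambda-1)}\to0$ so that the nullcline limits on $\alpha=0$. Choosing $c>\rho_*$ large, the launch point lies below the nullcline, where $d\alpha/ds>0$ and $d\rho/ds=\cos\alpha>0$; I would verify that the region $\{-\tfrac{\pi}{2}<\alpha<0,\ \rho>c\}$ below the nullcline is forward invariant, so that $\alpha$ increases to $0$ while $\rho\nearrow\infty$, and that $\phi$ (which decreases since $\sin\alpha<0$) stays positive: for large $c$ one has $d\phi/d\alpha=\sin\alpha/(-\lambda\rho^{1-\lambda}-(\lambda+1)\sin\alpha)\approx-1/(\lambda+1)$, so the total drop of $\phi$ is strictly less than $\tfrac{\pi}{2}$ and $\phi$ remains in $(0,\tfrac\pi2)$. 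Granting this, $\alpha\in(-\tfrac\pi2,0)$ and $\phi\in(0,\tfrac\pi2)$ force $\psi=\alpha+\phi\in(-\tfrac\pi2,\tfrac\pi2)$ automatically, so $dr/ds=\cos\psi>0$; hence $r=\rho\cos\phi$ increases strictly from $0$ to $\infty$, $v=\rho\sin\phi>0$ is a genuine graph over all of $(0,\infty)$, and it solves $F_\lambda(v)=0$. Since $C=-\lambda\neq0$, equivalently since $\theta$ is strictly monotone and nonconstant, this solution is not special Lagrangian.

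The step I expect to be the main obstacle is precisely the invariant-region verification: confirming that the trajectory neither crosses the nullcline into the region where $d\alpha/ds<0$ (which could carry $\alpha$, and hence $\psi$, past $0$ and eventually toward $\pi/2$) nor drives $\phi$ down through $0$ (which would push $\psi$ below $-\pi/2$ and destroy the graph property, or steer the curve into the origin). Both are governed by the monotonicity of $\theta$ from \eqref{hs2} and by taking $c$ large, but upgrading the ``stays below the nullcline, $\phi$ stays positive'' claim from an asymptotic heuristic to a rigorous statement is where the real work lies; once the trajectory is trapped, global existence in $s$ is immediate because $|d\rho/ds|\le1$ and $\rho$ stays bounded away from $0$.
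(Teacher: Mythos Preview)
Your approach is genuinely different from the paper's. The paper works directly with the ODE for $v$, takes initial data $v(0)=v'(0)=1$, and uses two elementary maximum-principle style contradiction arguments: first that $v'>0$ persists (examining the equation at a putative first zero of $v'$), then that $v-\delta v'>0$ persists for some small fixed $\delta$ (examining a putative first zero of $v-\delta v'$). Integrating $v'/v<1/\delta$ then gives an exponential bound on $v$ and hence on $v'$, which forces indefinite continuation. There is no arclength, no phase plane, and no large parameter.

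Your arclength--polar reformulation is correct, and the trapping region below the $\alpha$-nullcline with $\rho\ge c$ is genuinely forward invariant, so $\rho\to\infty$ and $\alpha\nearrow 0$ follow cleanly. The step you flagged is indeed the weak point, but your heuristic there points the wrong way: one computes
\[
\left|\frac{d\phi}{d\alpha}\right|=\frac{-\sin\alpha}{-(\lambda+1)\sin\alpha-\lambda\rho^{1-\lambda}}>\frac{1}{\lambda+1},
\]
and the excess becomes unbounded near the nullcline, so the $\alpha$-integral does not bound the drop of $\phi$. A clean fix is the exact identity $(\lambda+1)\phi=\theta-\alpha$ (subtract $\alpha=\psi-\phi$ from $\theta=\psi+\lambda\phi$) together with $d\theta/ds=-\lambda\rho^{-\lambda}$: since $\rho\ge c$ throughout and $d\rho/ds\ge\tfrac{1}{\sqrt2}$ once $\alpha$ has passed $-\tfrac\pi4$ (which takes $s=O(c)$), the total drop of $\theta$ is $O(c^{1-\lambda})$. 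Thus for $c$ large $\theta$ stays positive, hence $\phi=(\theta-\alpha)/(\lambda+1)>0$, and then $\psi=\alpha+\phi\in(-\tfrac\pi2,\tfrac\pi2)$ follows from $\alpha\in(-\tfrac\pi2,0)$ and $\phi\in(0,\tfrac\pi2)$. With this patch your argument closes and yields more asymptotic information about the profile curve than the paper's, at the cost of requiring $c$ large rather than $c=1$.
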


\begin{proof}
Let
\begin{align*}
v(0)  &  =1\\
v^{\prime}(0)  &  =1.
\end{align*}
Equivalent to (\ref{F-lambda}), we are trying to solve
\begin{equation}
v^{\prime\prime}=G_{\lambda}(r,v,v^{\prime})=-\lambda\left[  \frac{\left(
1+\left(  v^{\prime}\right)  ^{2}\right)  ^{1/2}}{\left(  r^{2}+v^{2}\right)
^{\lambda/2}}+\frac{v^{\prime}r-v}{r^{2}+v^{2}}\right]  \left(  1+\left(
v^{\prime}\right)  ^{2}\right)  . \label{FirstOrderSystem}%
\end{equation}
Notice that when $v>0$ or $r>0,$ the function $G_{\lambda}(r,v,v^{\prime})$ is
smooth in terms of its arguments. Thus by the standard ODE\ theory (see for
example \cite{BR} Theorem 8, Chapter 6), by choosing $v(0)=1>0$ we guarantee
the existence of a solution on some interval $[0,T).$

Suppose that $T<\infty$ is the maximum of such $T.$ \ First we claim that
given these initial conditions we have that on $[0,T),$
\begin{align*}
v  &  \geq1\\
v^{\prime}  &  >0.
\end{align*}
To see this, we argue by contradiction. Assume it is not the case. At the
first time $r_{1}$ where $v^{\prime}=0$, it must hold that
\[
v^{\prime\prime}\leq0
\]
and \eqref{FirstOrderSystem} reads
\begin{align*}
v^{\prime\prime}  &  =-\lambda\left[  \frac{1}{\left(  r^{2}+v^{2}\right)
^{{\lambda}/{2}}}-\frac{v}{r^{2}+v^{2}}\right] \\
&  =-\lambda\frac{1}{\left(  r^{2}+v^{2}\right)  }\left[  \frac{1}{\left(
r^{2}+v^{2}\right)  ^{{\lambda}/{2}-1}}-v\right]  .
\end{align*}
Now $v>1$ and $\lambda\geq2$, so the right hand side is clearly positive, and
we have contradiction, hence proving the claim that $v^{\prime}>0$.

Next, we observe the following. Rewriting \eqref{FirstOrderSystem} as
\[
v^{\prime\prime}=-\lambda\left[  \frac{\left(  1+\left(  v^{\prime}\right)
^{2}\right)  ^{{3}/{2}}}{\left(  r^{2}+v^{2}\right)  ^{{\lambda}/{2}} }%
+\frac{v^{\prime}r}{r^{2}+v^{2}}\left(  1+\left(  v^{\prime}\right)
^{2}\right)  -\frac{v}{r^{2}+v^{2}}\left(  1+\left(  v^{\prime}\right)
^{2}\right)  \right]
\]
when $v^{\prime}>0$ we have
\begin{align*}
v^{\prime\prime}  &  \leq-\lambda\frac{v^{\prime}r}{r^{2}+v^{2}}\left(
1+\left(  v^{\prime}\right)  ^{2}\right)  +\lambda\frac{v}{r^{2}+v^{2}}\left(
1+\left(  v^{\prime}\right)  ^{2}\right) \\
&  \leq\lambda\frac{v}{r^{2}+v^{2}}+\lambda\frac{\left(  v^{\prime}\right)
^{2}v-r\left(  v^{\prime}\right)  ^{3}}{r^{2}+v^{2}}.
\end{align*}
In particular, after some small $\delta>0$ where the solution exists, (choose
$\delta<{1}/\sqrt{\lambda}$ ) we see that on $[\delta,T)$
\[
v^{\prime\prime}\leq\lambda+\lambda\frac{\left(  v^{\prime}\right)  ^{2}%
}{r^{2}+v^{2}} (v-\delta v^{\prime}).
\]
Certainly $v-\delta v^{\prime}>0$ for sufficiently small $\delta$ since
$v\geq1$ for all $r$. We claim that $v-\delta v^{\prime}>0$ for all $r$. We
argue by contradiction. Suppose this is not the case. At the first time
$r_{0}$ for which $v-\delta v^{\prime}=0$, we must\ have
\begin{align*}
v^{\prime}-\delta v^{\prime\prime}  &  \leq0\\
v  &  =\delta v^{\prime}%
\end{align*}
so at $r_{0}$
\[
v^{\prime}\leq\delta v^{\prime\prime}\leq\delta\lambda+\delta\lambda
\frac{\left(  v^{\prime}\right)  ^{2}}{r^{2}+v^{2}}(v-\delta v^{\prime
})=\delta\lambda.
\]
But $v(r_{0})>1$ since $v$ is increasing from $1$, so at $r_{0}$
\[
v^{\prime}=\frac{v}{\delta}>\frac{1}{\delta}>\delta\lambda
\]
for $\delta$ chosen small enough, which is a contradiction. Thus $v-\delta
v^{\prime}>0$ for all $r$ and we can integrate%
\[
\frac{v^{\prime}}{v}<\frac{1}{\delta}%
\]
that is
\[
v(r)\leq v(\delta)e^{\frac{1}{\delta}r}.
\]
Thus on the interval $[\delta,T)$ we have
\[
v^{\prime}\leq\frac{1}{\delta}v\leq\frac{1}{\delta}v(\delta)e^{\frac{T}%
{\delta}}\leq C_{1}
\]
This implies that $v^{\prime}$ is bounded.

It is easy to see that as long as $v>1$ and $0<v^{\prime}<C_{1}$ the right
hand side $G_{\lambda}(r,v,v^{\prime})$ of the ODE (\ref{FirstOrderSystem}) is
smooth in its arguments on any fixed bounded interval $[0,T)$, so in
particular, the solution must be well-defined at $T.$ \ Again (c.f. \cite{BR})
we may extend this to $[T,T+$ $\tau)$ for some $\tau>0$. \ It follows that for
$T<\infty,$ the interval $[0,T)$ cannot be the maximal domain of existence.
\end{proof}

\textbf{Some explicit solutions and geometric examples.} For dimensions 3 and
4 we can write down the following explicit solutions to \eqref{Hamstat},
respectively
\begin{align*}
u(r)  &  =r\\
u(r)  &  =\ln r.
\end{align*}

For the solution $u=r,$ and $n=3,$ consider the gradient graph
\[
\Gamma=\left(  x,Du(x)\right)  =\left\{  \left(  x,\frac{x}{|x|}\right)
:x\in\mathbb{B}^{3}\backslash\{0\}\right\}  \subset{\mathbb{R}}^{3}%
\oplus{\mathbb{R}}^{3}%
\]
over the unit ball (or ball of any size) in $\mathbb{R}^{3}$. While the
function $Du(x)=$ $\frac{x}{|x|}$ has an isolated non-continuous singularity
at $0\in\mathbb{B}^{3}$, the submanifold is a smooth embedded submanifold with
a smooth boundary consisting of disjoint copies of $\mathbb{S}^{2}.$ \ A
smooth embedding
\[
\mathbb{S}^{2}\times\lbrack0,1]\rightarrow\bar{\Gamma}%
\]
becomes obvious when writing
\[
\bar{\Gamma}=\left\{  (sV,V):V\in\mathbb{S}^{2},0\leq s\leq1\right\}  .
\]
This topological cylinder cannot be isotopic (Hamiltonian or otherwise) to the
graph of a smooth function over the unit ball.

Inverting $\bar{\Gamma}$ through the origin in ${\mathbb{R}}^{3}$ and gluing,
we can obtain a nongraphical smooth Hamiltonian stationary manifold%
\[
\Gamma_{0}=\left\{  (sV,V):V\in\mathbb{S}^{2},-1<s<1\right\}  .
\]

\bigskip

\section{Point singularities for radial solutions to Hamiltonian stationary
equations}

Motivated by the examples at the end of last section, we consider the
alternative possibility, that $\lim_{r\rightarrow0}u^{\prime}(r)=0$. In this
case the gradient graph is homeomorphic to the domain of the function.

To begin we establish an analytic result for later application:

\begin{lem}
\label{technical} Suppose that $v$ is a positive differentiable function on
$\left(  0,2R\right)  $ and continuous at $0$. Suppose that there is a
sequence of decreasing positive numbers $r_{k}\rightarrow0$ as $k\rightarrow
\infty$ and a constant $c>0$ such that both
\begin{align}
v(r_{k})  &  \geq cr_{k},\label{CC1}\\
v(r_{k})  &  =\max_{r\in\lbrack0,r_{k}]}v(r). \label{CC2}%
\end{align}
If
\begin{equation}
\lim_{\varepsilon\rightarrow0^{+}}\int_{\varepsilon}^{R}\sqrt{\frac{1+\left(
v^{\prime}\right)  ^{2}}{r^{2}+v^{2}}}\,dr<\infty\label{CC3}%
\end{equation}
then
\[
\lim_{r\rightarrow0}v(r)=\delta>0.
\]

\end{lem}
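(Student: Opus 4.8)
The plan is to recognize the integrand in \eqref{CC3} as the arc-length element of the planar curve $r\mapsto(r,v(r))$ measured against its distance to the origin, and then to exploit radial monotonicity. Writing $\rho(r)=\sqrt{r^{2}+v(r)^{2}}$ for that distance, a direct computation gives $\frac{d}{dr}\log\rho=\frac{r+vv'}{r^{2}+v^{2}}$, and the Cauchy--Schwarz inequality $|r+vv'|\le\sqrt{r^{2}+v^{2}}\,\sqrt{1+(v')^{2}}$ yields the pointwise bound
\[
\sqrt{\frac{1+\left(v'\right)^{2}}{r^{2}+v^{2}}}\ \geq\ \left|\frac{d}{dr}\log\rho\right|
\]
on $(0,2R)$. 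In other words, the finite integral in \eqref{CC3} dominates the total variation of $\log\rho$ near the origin. This single inequality is the heart of the matter, and I expect it to be the main obstacle: the cruder estimates $\sqrt{1+(v')^{2}}\geq 1$ or $\sqrt{1+(v')^{2}}\geq|v'|$ are genuinely insufficient (one can arrange $v\to0$ for which the resulting sums converge), so the logarithmic, arc-length structure must be used in full.

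Next I would record that the limit $\delta:=\lim_{r\rightarrow0^{+}}v(r)$ exists and satisfies $\delta\geq 0$. By \eqref{CC2} the numbers $v(r_{k})=\max_{[0,r_{k}]}v$ are non-increasing in $k$, hence converge to $\limsup_{r\rightarrow0^{+}}v$, and continuity of $v$ at $0$ identifies this with $\delta$. Here \eqref{CC1} plays the auxiliary role of confining the points $(r_{k},v(r_{k}))$ to the cone $v\geq cr$, so that $\rho(r_{k})$ is comparable to $v(r_{k})$. It then remains only to rule out $\delta=0$.

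Suppose, for contradiction, that $\delta=0$. Since also $r\rightarrow0$, we obtain $\rho(r)\rightarrow0$, hence $\log\rho(r)\rightarrow-\infty$ as $r\rightarrow0^{+}$. Integrating the pointwise inequality above over $[\varepsilon,R]$ and applying the fundamental theorem of calculus to $\log\rho$ gives
\[
\int_{\varepsilon}^{R}\sqrt{\frac{1+\left(v'\right)^{2}}{r^{2}+v^{2}}}\,dr\ \geq\ \left|\int_{\varepsilon}^{R}\frac{d}{dr}\log\rho\,dr\right|=\bigl|\log\rho(R)-\log\rho(\varepsilon)\bigr|.
\]
Letting $\varepsilon\rightarrow0^{+}$, the right-hand side tends to $+\infty$, contradicting the finiteness hypothesis \eqref{CC3}. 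Therefore $\delta>0$, as claimed.

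One technical point I would verify when filling in the details is the use of the fundamental theorem of calculus for $\log\rho$: since $v$ is only assumed differentiable, I would note that $\log\rho$ is differentiable everywhere on the compact interval $[\varepsilon,R]\subset(0,2R)$ with derivative dominated by the integrand of \eqref{CC3}, which lies in $L^{1}[\varepsilon,R]$; an everywhere-differentiable function with integrable derivative is absolutely continuous, so the identity $\log\rho(R)-\log\rho(\varepsilon)=\int_{\varepsilon}^{R}(\log\rho)'\,dr$ is justified and the estimate above is legitimate.
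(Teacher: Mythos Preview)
Your proof is correct and takes a genuinely different route from the paper's. The paper uses only the cruder bound $\sqrt{1+(v')^{2}}\geq v'$, which by itself is insufficient (as you anticipate); it compensates by invoking \eqref{CC1} and \eqref{CC2} on each subinterval $[r_{k},r_{k-1}]$ to replace the denominator by its maximum $\sqrt{r_{k-1}^{2}+v(r_{k-1})^{2}}\leq \sqrt{1+c^{-2}}\,v(r_{k-1})$, obtaining
\[
\int_{r_{k}}^{r_{k-1}}\sqrt{\frac{1+(v')^{2}}{r^{2}+v^{2}}}\,dr\ \geq\ \frac{c}{\sqrt{1+c^{2}}}\Bigl(1-\frac{v(r_{k})}{v(r_{k-1})}\Bigr),
\]
and then running a telescoping infinite-product argument to conclude $\prod_{k}v(r_{k})/v(r_{k-1})>0$, hence $\lim_{k}v(r_{k})>0$. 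Your Cauchy--Schwarz step $|r+vv'|\leq\sqrt{r^{2}+v^{2}}\sqrt{1+(v')^{2}}$ is the sharper inequality that makes all of this unnecessary: it yields the total-variation bound on $\log\rho$ in one stroke and reduces the conclusion to the divergence of $\log\rho(\varepsilon)$ when $\delta=0$. In fact your argument never uses \eqref{CC1} or \eqref{CC2} in any essential way---continuity at $0$ already furnishes the limit $\delta$, and the comparison with $\log\rho$ handles the rest---so you have actually established the stronger statement that \eqref{CC3} alone forces $v(0)>0$ for any positive differentiable $v$ continuous at the origin. The paper's hypotheses \eqref{CC1}--\eqref{CC2} are there precisely to rescue the weaker pointwise estimate, and your approach shows they are superfluous for the lemma as stated.
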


\begin{proof}
For the sequence of decreasing positive numbers $r_{k}\rightarrow0$ given by
the assumption,
\begin{align}
\int_{r_{k}}^{r_{k-1}}\sqrt{\frac{1+\left(  v^{{\prime}}\right)  ^{2}}%
{r^{2}+v^{2}}}dr &  \geq\int_{r_{k}}^{r_{k-1}}\frac{v^{{\prime}}}{\sqrt
{r^{2}+v^{2}}}dr\label{use}\\
&  \geq\frac{1}{\max\sqrt{r^{2}+v^{2}}}\int_{r_{k}}^{r_{k-1}}v^{{\prime}%
}dr\nonumber\\
&  =\frac{1}{\sqrt{r_{k-1}^{2}+v^{2}(r_{k-1})}}\left[  v(r_{k-1}%
)-v(r_{k})\right]  \nonumber
\end{align}
using that%
\[
v(r_{k-1})=\max_{r\in\lbrack0,r_{k}]}v(r)\geq\max_{r\in\lbrack r_{k},r_{k-1}%
]}v(r).
\]
Now using
\[
v(r_{k})\geq cr_{k},
\]
we have
\begin{align*}
\sqrt{r_{k-1}^{2}+v^{2}(r_{k-1})} &  \leq\sqrt{\frac{v^{2}(r_{k-1})}{c^{2}%
}+v^{2}(r_{k-1})}\\
&  =\sqrt{\frac{1}{c^{2}}+1}\,v(r_{k-1})
\end{align*}
thus
\[
\frac{1}{\sqrt{r_{k-1}^{2}+v^{2}(r_{k-1})}}\geq\frac{c}{\sqrt{1+c^{2}}}%
\frac{1}{v(r_{k-1})}.
\]
It then follows from \eqref{use}
\begin{align}
\int_{r_{k}}^{r_{k-1}}\sqrt{\frac{1+\left(  v^{{\prime}}\right)  ^{2}}%
{r^{2}+v^{2}}}dr &  \geq\frac{c}{\sqrt{1+c^{2}}}\frac{1}{v(r_{k-1})}\left[
v(r_{k-1})-v(r_{k})\right]  \label{use1}\\
&  \geq\frac{c}{\sqrt{1+c^{2}}}\left[  1-q_{k}\right]  \nonumber
\end{align}
where
\[
q_{k}=\frac{v(r_{k})}{v(r_{k-1})}\leq1
\]
as $r_{k}<r_{k-1}$ from (\ref{CC2}). Now let
\[
a_{k}=1-q_{k}%
\]
so that $0\leq a_{k}<1$, and then sum \eqref{use1} over $k$
\[
\lim_{k\rightarrow\infty}\int_{r_{k}}^{r_{0}}\sqrt{\frac{1+\left(
v^{^{\prime}}\right)  ^{2}}{r^{2}+v^{2}}}dr\geq\frac{c}{\sqrt{1+c^{2}}}%
\sum_{k=1}^{\infty}a_{k}.
\]
Thus
\[
\sum_{k=1}^{\infty}a_{k}\leq\frac{\sqrt{1+c^{2}}}{c}\int_{0}^{r_{0}}%
\sqrt{\frac{1+\left(  v^{^{\prime}}\right)  ^{2}}{r^{2}+v^{2}}}dr<\infty
\]
by (\ref{CC3}). \ In particular,
\[
\lim_{k\rightarrow\infty}a_{k}=0.
\]
Recall that for a sequence $q_{k}$ of positive numbers, $\prod_{k=1}^{\infty
}q_{k}$ converges if and only if $\sum_{k=1}^{\infty}\ln q_{k}$ converges.
Now
\[
\sum_{k=1}^{\infty}\ln q_{k}=\sum_{k=1}^{\infty}\ln(1-a_{k}).
\]
For large enough $K_{0}$ there exists a constant $C$ so that
\[
\left\vert \sum_{k>K_{0}}\ln(1-a_{k})\right\vert \leq C\left\vert
\sum_{k>K_{0}}a_{k}\right\vert <\infty.
\]
Thus, we may assume
\[
\sum_{k=1}^{\infty}\ln q_{k}=q>-\infty.
\]
Then
\[
\prod_{k=1}^{\infty}q_{k}=e^{q}\not =0.
\]
But
\[
q_{k}q_{k-1}\cdots q_{1}=\frac{v(r_{k})}{v(r_{k-1})}\frac{v(r_{k-1}%
)}{v(r_{k-2})}\cdots\frac{v(r_{2})}{v(r_{1})}=\frac{v(r_{k})}{v(r_{1})}.
\]
Hence
\[
\lim_{k\rightarrow\infty}v(r_{k})=e^{q}v(r_{1})>0.
\]
This concludes the proof of the lemma.
\end{proof}

\begin{cor}
\label{initial} Suppose that $v$ is a solution of (\ref{main}) on $(0,R)$. If
$C\neq0$, then $\lim_{r\rightarrow0}v(r)\neq0.$
\end{cor}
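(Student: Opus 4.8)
The plan is to argue by contradiction: assume the limit exists and $\lim_{r\to0}v(r)=0$, and then check that $v$ meets every hypothesis of Lemma~\ref{technical}, whose conclusion $\lim_{r\to0}v(r)=\delta>0$ contradicts the assumption. The heart of the matter is the integrability condition \eqref{CC3}. Writing $u''=v'$ and $u'=v$, equation \eqref{hs2} reads $\theta_r=C\sqrt{(1+(v')^2)/(r^2+v^2)^{n-1}}$, so $\theta_r$ has the constant sign of $C$, and since $\theta=\arctan v'+(n-1)\arctan(v/r)$ is bounded by $n\pi/2$ and monotone, $\int_0^R|\theta_r|\,dr=|\theta(R)-\theta(0^+)|\le n\pi<\infty$. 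The identity
\[
\sqrt{\frac{1+(v')^2}{r^2+v^2}}=\frac{|\theta_r|}{|C|}\,(r^2+v^2)^{\frac{n-2}{2}}
\]
then finishes this step: because $v\to0$, the function $v$ is bounded on $[0,R]$, so $(r^2+v^2)^{(n-2)/2}\le M$ there (the exponent is $\ge0$ for $n\ge2$), and hence $\int_\varepsilon^R\sqrt{(1+(v')^2)/(r^2+v^2)}\,dr\le (M/|C|)\int_0^R|\theta_r|\,dr$ uniformly in $\varepsilon$, which is \eqref{CC3}.

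Next I would produce the sequence needed for \eqref{CC1}--\eqref{CC2}. First, $v/r\not\to0$: if $v/r\to0$ then $r^2+v^2\le 2r^2$ near $0$, whence $|\theta_r|\ge |C|\,2^{-(n-1)/2}r^{-(n-1)}$, which is non-integrable at $0$ for every $n\ge2$ and contradicts $\int_0^R|\theta_r|<\infty$. So there is $s_k\downarrow0$ with $|v(s_k)|/s_k\ge c_0>0$; passing to a subsequence $v(s_k)$ has a fixed sign, and after replacing $v$ by $-v$ (which solves \eqref{main} with $C$ replaced by $-C\ne0$) we may assume $v(s_k)\ge c_0 s_k>0$. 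Let $r_k\in(0,s_k]$ be a point where $v$ attains its maximum over $[0,s_k]$ (continuity at $0$ with $v(0)=0$ guarantees one exists). Then $v(r_k)\ge v(s_k)\ge c_0 s_k\ge c_0 r_k$, which is \eqref{CC1}, while $v(r_k)=\max_{[0,s_k]}v\ge\max_{[0,r_k]}v\ge v(r_k)$ gives \eqref{CC2}; a further subsequence makes $r_k$ strictly decreasing to $0$. With \eqref{CC1}, \eqref{CC2}, \eqref{CC3} established, Lemma~\ref{technical} yields $\lim_{r\to0}v(r)=\delta>0$, contradicting $\lim_{r\to0}v(r)=0$ and proving the corollary.

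The main obstacle is the positivity hypothesis of Lemma~\ref{technical}: the construction only delivers $v(r_k)>0$ at the running maxima, whereas the lemma asks for $v>0$ on all of $(0,2R)$, and a priori $v$ could dip negative between consecutive $r_k$. I expect to close this gap in one of two ways. One option is to localize the estimate inside the lemma, since its lower bound on each $[r_k,r_{k-1}]$ uses only the running maximum at the right endpoint together with $0\le v\le v(r_{k-1})$ on that subinterval, so it suffices to know $v\ge0$ on $(0,r_0]$. The cleaner option is to show directly that $v$ has a definite sign near the origin: at any zero $\bar r$ of $v$ with $v'(\bar r)=0$, equation \eqref{main} gives $v''(\bar r)=C\,\bar r^{-(n-1)}$, so the sign of $C$ forces such interior critical zeros to be one-sided extrema rather than sign changes, pinning the sign of $v$ on a punctured neighbourhood of $0$. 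Making this sign-stabilization rigorous — in particular ruling out an accumulation of simple (transversal) zeros of $v$ at $0$ — is the one genuinely delicate point; the remaining steps are the bookkeeping above.
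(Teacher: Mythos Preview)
Your skeleton matches the paper's proof: assume $v(0^+)=0$, extract the finite integral \eqref{CC3} from \eqref{hs2} and the a priori bound $|\theta|<n\pi/2$, produce a sequence for \eqref{CC1}--\eqref{CC2}, and invoke Lemma~\ref{technical}. Your running-maximum construction of $r_k$ is in fact tidier than the paper's treatment, which cites the sequence from \eqref{r_k} without explicitly checking \eqref{CC2}.

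The gap you flag is real, and the repair you propose is not the right one. Analysing zeros of $v$ at which $v'=0$ only tells you that such tangential zeros are strict extrema; it says nothing about transversal zeros, which are exactly what must be excluded. The paper's device is different: fix $C>0$ by symmetry and observe that at any point with $v\ge 0$ and $v'=0$ the equation gives
\[
v''=(n-1)\frac{v}{r^2+v^2}+\frac{C}{(r^2+v^2)^{(n-1)/2}}>0,
\]
so $v$ admits \emph{no nonnegative local maximum}. This forbids any positive lobe between two zeros, hence $v$ vanishes at most once on $(0,R)$ and has a definite sign near $0$. The paper then splits: if $v>0$ near $0$ one applies Lemma~\ref{technical} directly; if $v<0$ (still with $C>0$) one needs further ODE analysis at local maxima of $v$, with separate arguments for $n\ge 3$ and $n=2$. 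Your symmetry reduction (replacing $v$ by $-v$ so that the running maxima are positive) would, \emph{if} combined with the correct local-extremum observation in the form appropriate to the sign of the new $C$ (no nonnegative local max when $C>0$, no nonpositive local min when $C<0$), force $v>0$ on a punctured neighbourhood of $0$ and let you go straight to the lemma, bypassing the paper's dimensional case split. But the critical-zero observation you actually recorded does not deliver this, so as written the proof is incomplete at precisely the point you identified.
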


\begin{proof}
Suppose not. \ Assume that $v$ is a solution and $\lim_{r\rightarrow0}v(r)=0.$
For $0<\varepsilon<R$, integrating \eqref{hs2} leads to
\begin{align*}
\left\vert \theta(R)-\theta(\varepsilon)\right\vert  &  =|C|\int_{\varepsilon
}^{R}\sqrt{\frac{1+\left(  v^{\prime}\right)  ^{2}}{\left(  r^{2}%
+v^{2}\right)  ^{n-1}}}dr\\
&  =\left\vert C\right\vert \int_{\varepsilon}^{R}\frac{1}{\left(  r^{2}%
+v^{2}\right)  ^{\frac{n-2}{2}}}\sqrt{\frac{1+\left(  v^{\prime}\right)  ^{2}%
}{r^{2}+v^{2}}}dr\\
&  \geq\min_{r\in(\varepsilon,R)}\frac{1}{\left(  r^{2}+v^{2}\right)
^{\frac{n-2}{2}}}\left\vert C\right\vert \int_{\varepsilon}^{R}\sqrt
{\frac{1+\left(  v^{\prime}\right)  ^{2}}{r^{2}+v^{2}}}dr.
\end{align*}
Now assuming $\lim_{r\rightarrow0}v(r)=0$, $v$ is bounded, and we have
\begin{equation}
\lim_{\varepsilon\rightarrow0^{+}}\int_{\varepsilon}^{R}\sqrt{\frac{1+\left(
v^{\prime}\right)  ^{2}}{r^{2}+v^{2}}}dr\leq C_{R}<\infty
.\label{finite integral}%
\end{equation}

Next, if there is constant $t_{0}>0$ so that $|v(r)|\leq c r$ for all
$r<t_{0}$, \eqref{finite integral} cannot hold. Thus, there is a sequence of
decreasing positive numbers $r_{k}\to0$ such that
\begin{equation}
\label{r_k}|v(r_{k})|\geq c r_{k}.
\end{equation}

Now we show that $v$ does not frequently change sign. Using the odd and even
properties of the equation (\ref{hs2}), we may assume that $C>0.$ In this
case, if $v$ has a nonnegative local maximum, then at the point where the
maximum is attained, we have
\begin{align*}
0\geq\frac{1}{1+\left(  v^{\prime}\right)  ^{2}}v^{\prime\prime}  &  =-\left(
n-1\right)  \frac{v^{\prime}r-v}{r^{2}+v^{2}}+C\,\frac{(1+\left(  v^{\prime
}\right)  ^{2})^{\frac{1}{2}}}{\left(  r^{2}+v^{2}\right)  ^{\frac{n-1}{2}}}\\
&  =\left(  n-1\right)  \frac{v}{r^{2}+v^{2}}+C\,\frac{1}{\left(  r^{2}%
+v^{2}\right)  ^{\frac{n-1}{2}}}>0
\end{align*}
a contradiction. In particular, if $v^{\prime}(a)>0$ and $v(a)>0$, then we
must have $v^{\prime}(r)>0$ for all $r>a.$ It follows that $v(r)=0$ can occur
no more than once (not counting at $r=0$). We conclude that there is an
interval $(0,t_{1})$ where $v$ is either positive or negative.

If $v>0$ on $(0,t_{1})$, we can apply Lemma \ref{technical} to conclude that
$\lim_{r\rightarrow0}v(r)\not =0$, recalling (\ref{finite integral}) and
(\ref{r_k}).

If $v<0$ on $(0,t_{1})$, we note that if there exists a positive number
$t_{2}<t_{1}$ such that $v^{\prime}\leq0$ on $(0,t_{2})$, then Lemma
\ref{technical} applies to $-v$ and yields the contradiction. So we assume
that there is a sequence of decreasing positive numbers $s_{k}^{(1)}%
\rightarrow0$ with $v^{\prime}(s_{k}^{(1)})>0$. Note that there also exists
another decreasing sequence $s_{k}^{(2)}\rightarrow0$ where $v^{\prime}%
(s_{k}^{(2)})\leq0$, otherwise $v$ would be positive near $0$ as we are
assuming $\lim_{r\rightarrow0}v(r)=0$, this would contradict $v<0$ on
$(0,t_{1}).$ Then, there would exist a decreasing sequence $t_{k}\rightarrow0$
where $v$ attains local maximum. We now treat the the two cases $n\geq3$ and
$n=2$ separately. When $n\geq3$
\[
0\geq v^{\prime\prime}(t_{k})=(n-1)\frac{v(t_{k})}{t_{k}^{2}+v^{2}(t_{k}%
)}+\frac{C}{(t_{k}^{2}+v^{2}(t_{k}))^{\frac{n-1}{2}}}%
\]
which gives a contradiction since $v(t_{k})\rightarrow0$ when $t_{k}%
\rightarrow0$, recalling again that $C>0.$

We consider next the case $n=2.$ \ In this case, we may find some $r_{0}$ with
$v^{\prime}(r_{0})>0.$ $\ $Choose an interval $(s_{0},t_{0})$ so that
$r_{0}\in(s_{0},t_{0})$ and
\begin{align*}
v^{\prime}(s_{0})  &  =0\\
v^{\prime}(t_{0})  &  =0\\
v^{\prime}(r)  &  >0\text{ on }(s_{0},t_{0})
\end{align*}
\newline At~$t_{0}$,
\[
0\geq\left(  t_{0}^{2}+v^{2}(t_{0})\right)  v^{\prime\prime}(t_{0}%
)=v(t_{0})+C\left(  t_{0}^{2}+v^{2}(t_{0})\right)  ^{\frac{1}{2}}.
\]
This yields that
\[
v^{2}(t_{0})\geq C^{2}\left(  t_{0}^{2}+v^{2}(t_{0})\right)
\]
so clearly $C<1$ and%
\[
\left\vert v(t_{0})\right\vert \geq\frac{C}{\sqrt{1-C^{2}}}t_{0}.
\]
Now at $s_{0}<r_{0}$ we will have
\begin{equation}
\left\vert v(t_{0})\right\vert =-v(t_{0})<-v(s_{0})=\left\vert v(s_{0}%
)\right\vert \label{ssc}%
\end{equation}
and
\[
0\leq\left(  s_{0}^{2}+v^{2}(s_{0})\right)  v^{\prime\prime}(s_{0}%
)=v(s_{0})+C\left(  s_{0}^{2}+v^{2}(s_{0})\right)  ^{\frac{1}{2}}.
\]
The latter inequality gives us
\[
\left\vert v(s_{0})\right\vert \leq\frac{C}{\sqrt{1-C^{2}}}s_{0}<\frac
{C}{\sqrt{1-C^{2}}}t_{0}\leq\left\vert v(t_{0})\right\vert
\]
which is clearly a contradiction of (\ref{ssc}).
\end{proof}

We now prove Theorem \ref{two}. \ 

\begin{proof}
Suppose that $u$ is a radial solution to (\ref{Hamstat}) on a neighborhood
$\mathbb{B}_{R}(0)$ with
\[
\lim_{r\rightarrow0}u^{\prime}(r)=0.
\]

Since $\lim_{r\rightarrow0}u^{\prime}(r)=0$, the contrapositive of Corollary
\ref{initial} asserts that $\theta$ is constant, so the gradient graph of $u$
is a calibrated submanifold which is smooth away from the origin and
continuous everywhere. For any $\rho\in(0,R]$, define
\[
u^{(\rho)}(x)=\frac{u^{\prime}(\rho)}{2\rho}|x|^{2}.
\]
Notice that on $\partial B_{\rho}(0)$ we have
\[
Du^{(\rho)}(x)=u^{\prime}(\rho)\frac{x}{\rho}=Du(x)
\]
and that because the latter is a continuous graph over the whole domain, these
gradient graphs will be isotopic. Now we may use the special Lagrangian
calibration argument (\cite[Section III]{HL}) to compare the volumes of the
graphs
\[
\Gamma=\left\{  (x,Du(x)):\ x\in B_{\rho}(0)\right\}
\]
and
\[
\Gamma^{(\rho)}=\left\{  (x,Du^{(\rho)}(x)):\ x\in B_{\rho}(0)\right\}  .
\]
First, let
\[
\phi=\operatorname{Re}\left(  e^{-\sqrt{-1}\theta}dz_{1}\wedge...\wedge
dz_{n}\right)  .
\]
Note that the closed $n$-form $\phi$ calibrates $\Gamma$. By Stokes' Theorem,
as $d\phi=0$, we have
\[
\mbox{Vol}(\Gamma)=\int_{\Gamma}\phi=\int_{\Gamma^{(\rho)}}\phi\leq
\mbox{Vol}(\Gamma^{(\rho)}).
\]
On the other hand, letting%
\[
\phi^{(\rho)}=\operatorname{Re}\left(  e^{-\sqrt{-1}\theta^{(\rho)}}%
dz_{1}\wedge...\wedge dz_{n}\right)
\]
where $\theta_{\rho}$ is the constant given by
\[
\theta^{(\rho)}=n\arctan\left(  \frac{u^{\prime}(\rho)}{\rho}\right)
\]
and noting that the closed $n$-form $\phi^{(\rho)}$ calibrates $\Gamma
^{(\rho)}$. We conclude that
\[
\mbox{Vol}(\Gamma^{(\rho)})=\int_{\Gamma^{(\rho)}}\phi^{(\rho)}=\int_{\Gamma
}\phi^{(\rho)}\leq\mbox{Vol}(\Gamma).
\]
It then follows that
\[
\mbox{Vol}(\Gamma^{(\rho)})=\mbox{Vol}(\Gamma)=\int_{\Gamma}\phi
\]
and
\[
\theta=\theta^{(\rho)}.
\]
This implies that
\[
u^{\prime}(\rho)=\rho\tan\left(  \frac{\theta}{n}\right)  \,
\]
for any $\rho\in(0,R]$, therefore $u$ is quadratic since $\theta$ is a constant.

Next, we assume that
\[
\lim_{r\rightarrow0}u^{\prime}(r)=c\neq0.
\]
Letting $u(0)=c,$ we have a map (extending (\ref{embedding})),%
\[
F:[0,\infty)\times\mathbb{S}^{n-1}\rightarrow\mathbb{R}^{n}\times
\mathbb{R}^{n}%
\]
defined by
\begin{equation}
F(r,\xi)=(r\xi,u^{\prime}(r)\,\xi)\in\mathbb{R}^{n}\times\mathbb{R}^{n}.
\end{equation}
It is not hard to see that this immersion is proper and injective, so must be
an embedding.
\end{proof}

\bibliographystyle{amsalpha}
\bibliography{hamstat1}

\end{document}